\documentclass{llncs}
\usepackage{bm,amsmath,amssymb}
\pagestyle{plain} % with nombre

\newcommand{\lc}{\textrm{\upshape lc}}
\newcommand{\prs}{\textrm{\upshape prs}}
\newcommand{\rprs}{\textrm{\upshape rprs}}
\newcommand{\subres}{\textrm{\upshape S}}
\newcommand{\recsubres}{\bar{\textrm{\upshape S}}} 
\newcommand{\nessubres}{\tilde{\textrm{\upshape S}}}
\newcommand{\rednessubres}{\hat{\textrm{\upshape S}}}

\begin{document}

\title{Recursive Polynomial Remainder Sequence\\ and the Nested Subresultants}
\author{Akira Terui}
\institute{Graduate School of Pure and Applied Sciences\\
  University of Tsukuba\\
  Tsukuba, 305-8571, Japan\\
  \email{terui@math.tsukuba.ac.jp}
}
\maketitle

\begin{abstract}
  We give two new expressions of subresultants, \emph{nested
    subresultant} and \emph{reduced nested subresultant}, for the
  recursive polynomial remainder sequence (PRS) which has been
  introduced by the author.  The reduced nested subresultant reduces
  the size of the subresultant matrix drastically compared with the
  recursive subresultant proposed by the authors before, hence it is
  much more useful for investigation of the recursive PRS.  Finally,
  we discuss usage of the reduced nested subresultant in approximate
  algebraic computation, which motivates the present work.
\end{abstract}

\section{Introduction}
\label{sec:intro}

The polynomial remainder sequence (PRS) is one of fundamental tools in
computer algebra.  Although the Euclidean algorithm (see Knuth
\cite{knu1998} for example) for calculating PRS is simple,
coefficient growth in PRS makes the Euclidean algorithm often very
inefficient.  To overcome this problem, the mechanism of coefficient
growth has been extensively studied through the theory of
subresultants; see Collins \cite{col1967}, Brown and Traub
\cite{bro-tra71}, Loos \cite{loos1983}, etc.  By the theory of
subresultant, we can remove extraneous factors of the elements of PRS
systematically.

In our earlier research \cite{ter2003}, we have introduced a
variation of PRS called ``recursive PRS,'' and its subresultant called
``recursive subresultant.''  The recursive PRS is a result of repeated
calculation of PRS for the GCD and its derivative of the original PRS,
until the element becomes a constant.  Then, the coefficients of the
elements in the recursive PRS depend on the coefficients of the
initial polynomials.  By the recursive subresultants, we have given an
expression of the coefficients of the elements in the recursive PRS in
certain determinants of coefficients of the initial polynomials.
However, as the recursion depth of the recursive PRS has increased,
the recursive subresultant matrices have become so large that use of
them have often become impractical \cite{ter2004}.

In this paper, we give two other expressions of subresultants for the
recursive PRS, called ``nested subresultant'' and ``reduced nested
subresultant.''  The nested subresultant is a subresultant with
expression of ``nested'' determinants, used to show the relationship
between the recursive and the reduced nested subresultants.  The
reduced nested subresultant has the same form as the result of
Gaussian elimination with the Sylvester's identity on the nested
subresultant, hence it reduces the size of the subresultant matrix
drastically compared with the recursive subresultant.  Therefore, it
is much more useful than the recursive subresultant for investigation
of the recursive PRS.

This paper is organized as follows.  In Sect.~\ref{sec:recprs}, we
review the concept of the recursive PRS and the recursive
subresultant.  In Sect.~\ref{sec:nessubres}, we define the nested
subresultant and show its equivalence to the recursive subresultant.
In Sect.~\ref{sec:rednessubres}, we define the reduced nested
subresultant and show that it is a reduced expression of the nested
subresultant.  In Sect.~\ref{sec:disc}, we discuss briefly usage of
the reduced nested subresultant in approximate algebraic computation.

\section{Recursive PRS and Recursive Subresultants}
\label{sec:recprs}

Let $R$ be an integral domain and $K$ be its quotient field, and
polynomials $F$ and $G$ be in $R[x]$.  When we calculate PRS for $F$
and $G$ which have a nontrivial GCD, we usually stop the calculation
with the GCD.  However, it is sometimes useful to continue the
calculation by calculating the PRS for the GCD and its derivative;
this is used for square-free decompositions.  We call such a PRS a
``recursive PRS.''

To make this paper self-contained, we briefly review the definitions
and the properties of the recursive PRS and the recursive
subresultant, with necessary definitions of subresultants (for
detailed discussions, see Terui \cite{ter2003}).  In this paper, we
follow definitions and notations by von zur Gathen and L\"ucking
\cite{vzg-luc2003}.

\subsection{Recursive PRS}

\begin{definition}[Polynomial Remainder Sequence (PRS)]
  \label{def:prs}
  Let $F$ and $G$ be polynomials in $R[x]$ of degree $m$ and $n$
  ($m>n$), respectively.  A sequence $(P_1,\ldots,P_l)$ of nonzero
  polynomials is called a \emph{polynomial remainder sequence (PRS)}
  for $F$ and $G$, abbreviated to $\prs(F,G)$, if it satisfies
  $P_1=F$, $P_2=G$, $\alpha_i P_{i-2} = q_{i-1} P_{i-1} + \beta_i
  P_{i}$,
  for $i=3,\ldots,l$, where $\alpha_3,\ldots,\alpha_l,$
  $\beta_3,\ldots,\beta_l$ are elements of $R$ and
  $\deg(P_{i-1})>\deg(P_{i})$.  A sequence
  $((\alpha_3,\beta_3),\ldots,$ $(\alpha_l,\beta_l))$ is called a
  \emph{division rule} for $\prs(F,G)$.  If $P_l$ is a constant, then
  the PRS is called \emph{complete}.  \qed
\end{definition}

\begin{definition}[Recursive PRS]
  \label{def:recprs}
  Let $F$ and $G$ be the same as in Definition~\ref{def:prs}. Then, a
  sequence
    $(P_1^{(1)},\ldots,P_{l_1}^{(1)},
    P_1^{(2)},\ldots,P_{l_2}^{(2)},
    \ldots,
    P_1^{(t)},\ldots,P_{l_t}^{(t)})$
    of nonzero polynomials is called a \emph{recursive polynomial
      remainder sequence} (recursive PRS) for $F$ and $G$, abbreviated
   to $\rprs(F,G)$, if it satisfies
  \begin{equation}
    \label{eq:recprsdef}
    \begin{split}
      & P_1^{(1)} = F,\quad P_2^{(1)}=G,\quad
%      \\
%      &
      P_{l_1}^{(1)}=\gamma_1\cdot\gcd(P_1^{(1)},P_2^{(1)})\quad
      \mbox{with $\gamma_1\in R$}, \\
      & (P_1^{(1)},P_2^{(1)},\ldots,P_{l_1}^{(1)})=\prs(P_1^{(1)},P_2^{(1)}),\\
      & P_1^{(k)}=P_{l_{k-1}}^{(k-1)},\;
      P_2^{(k)}=\frac{d}{dx}P_{l_{k-1}}^{(k-1)},\;
      P_{l_k}^{(k)}=\gamma_k\cdot\gcd(P_1^{(k)},P_2^{(k)})
      \;
      \mbox{with $\gamma_k\in R$}, \\
      & (P_1^{(k)},P_2^{(k)},\ldots,P_{l_k}^{(k)})=
      \prs(P_1^{(k)},P_2^{(k)}),
    \end{split}
  \end{equation}
  for $k=2,\ldots,t$.  If $\alpha_i^{(k)}$, $\beta_i^{(k)}\in R$
  satisfy
%  \begin{equation}
    $\alpha_i^{(k)} P_{i-2}^{(k)}
    =
    q_{i-1}^{(k)} P_{i-1}^{(k)}
    + 
    \beta_i^{(k)} P_i^{(k)}$
%  \end{equation}
  for $k=1,\ldots,t$ and $i=3,\ldots,l_k$, then a sequence
%  \linebreak
  $((\alpha_3^{(1)},\beta_3^{(1)}),\ldots,
  (\alpha_{l_t}^{(t)},\beta_{l_t}^{(t)}))$ is called a
  \emph{division rule} for
%  \linebreak
  $\rprs(F,G)$. 
  Furthermore, if $P_{l_t}^{(t)}$ is a constant, then the recursive
  PRS is called complete.
  \qed
\end{definition}
  
In this paper, we use the following notations.  Let
$c_i^{(k)}=\lc(P_i^{(k)})$, $n_i^{(k)}=\deg(P_i^{(k)})$, $j_0=m$ and
$j_k=n_l^{(k)}$ for $k=1,\ldots,t$ and $i=1,\ldots,l_k$, and let
$d_i^{(k)}=n_i^{(k)}-n_{i+1}^{(k)}$ for $k=1,\ldots,t$ and
$i=1,\ldots,l_k-1$.
%\end{definition}

\subsection{Recursive Subresultants}

We construct ``recursive subresultant matrix'' whose determinants
represent the elements of the recursive PRS by the coefficients of
the initial polynomials.

Let $F$ and $G$ be polynomials in $R[x]$ such that
\begin{equation}
  \label{eq:fg}
%  \begin{split}
  F(x) 
%  &
  = f_m x^m + \cdots + f_0 x^0,
%  \\
  \quad
  G(x) 
%  &
  = g_n x^n + \cdots + g_0 x^0,
%  \end{split}
\end{equation}
with $m\ge n>0$.  For a square matrix $M$, we denote its determinant
by $|M|$.

\begin{definition}[Sylvester Matrix and Subresultant Matrix]
  Let $F$ and $G$ be as in \textup{(\ref{eq:fg})}.  The
  \emph{Sylvester matrix} of $F$ and $G$, denoted by $N(F,G)$ in
  (\ref{eq:sylmat}), is an $(m+n)\times(m+n)$ matrix constructed from
  the coefficients of $F$ and $G$.  For $j<n$, the \emph{$j$-th
    subresultant matrix} of $F$ and $G$, denoted by $N^{(j)}(F,G)$ in
  (\ref{eq:sylmat}), is an $(m+n-j)\times(m+n-2j)$ sub-matrix of
  $N(F,G)$ obtained by taking the left $n-j$ columns of coefficients
  of $F$ and the left $m-j$ columns of coefficients of $G$.
  %, such that
  \begin{equation}
    \small
    \label{eq:sylmat}
    \begin{split}
      N(F,G) &=
      \begin{pmatrix}
        f_m    &        &        & g_n    &        &  \\
        \vdots & \ddots &        & \vdots & \ddots &  \\
        f_0    &        & f_m    & g_0    &        & g_n \\
               & \ddots & \vdots &        & \ddots & \vdots \\
               &        & f_0    &        &        & g_0
      \end{pmatrix},
      \\
      \noalign{\vskip-10pt}
      &
      \hskip17pt
      \underbrace{\hspace{1.5cm}}_{n}
      \hskip2pt
      \underbrace{\hspace{1.3cm}}_{m}
    \end{split}
    \quad
    \begin{split}
      N^{(j)}(F,G) &=
      \begin{pmatrix}
        f_m    &        &        & g_n    &        &  \\
        \vdots & \ddots &        & \vdots & \ddots &  \\
        f_0    &        & f_m    & g_0    &        & g_n \\
               & \ddots & \vdots &        & \ddots & \vdots \\
               &        & f_0    &        &        & g_0
       \end{pmatrix}.
      \\
      \noalign{\vskip-10pt}
      &
      \hskip17pt
      \underbrace{\hspace{1.5cm}}_{n-j}
      \hskip2pt
      \underbrace{\hspace{1.3cm}}_{m-j}
    \end{split}
  \end{equation}
%  \qed
\end{definition}

\begin{definition}[Recursive Subresultant Matrix]
  \label{def:recsubresmat}
  Let $F$ and $G$ be defined as in \textup{(\ref{eq:fg})}, and let
  $(P_1^{(1)},\ldots,P_{l_1}^{(1)},\ldots,P_1^{(t)},\ldots,P_{l_t}^{(t)})$
  be complete recursive PRS for $F$ and $G$ as in
  Definition~\ref{def:recprs}.  Then, for each tuple of numbers
  $(k,j)$ with $k=1,\ldots,t$ and $j=j_{k-1}-2,\ldots,0$, define
  matrix $\bar{N}^{(k,j)}=\bar{N}^{(k,j)}(F,G)$ recursively as
  follows.
  \begin{enumerate}
  \item For $k=1$, let $\bar{N}^{(1,j)}(F,G)=N^{(j)}(F,G)$.
  \item For $k>1$, let $\bar{N}^{(k,j)}(F,G)$ consist of the upper
    block and the lower block, defined as follows:
    \begin{enumerate}
    \item The upper block is partitioned into $(j_{k-1}-j_k-1)\times
      (j_{k-1}-j_k-1)$ blocks with diagonal blocks filled with
      $\bar{N}_U^{(k-1,j_{k-1})}$, where $\bar{N}_U^{(k-1,j_{k-1})}$
      is a sub-matrix of $\bar{N}^{(k-1,j_{k-1})}(F,G)$ obtained by
      deleting the bottom $j_{k-1}+1$ rows.
    \item Let $\bar{N}_L^{(k-1,j_{k-1})}$ be a sub-matrix of
      $\bar{N}^{(k-1,j_{k-1})}$ obtained by taking the bottom
      $j_{k-1}+1$ rows, and let $\bar{N}_L^{'(k-1,j_{k-1})}$ be a
      sub-matrix of $\bar{N}_L^{(k-1,j_{k-1})}$ by multiplying the
      $(j_{k-1}+1-\tau)$-th rows by $\tau$ for
      $\tau=j_{k-1},\ldots,1$, then by deleting the bottom row.  Then,
      the lower block consists of $j_{k-1}-j-1$ blocks of
      $\bar{N}_L^{(k-1,j_{k-1})}$ such that the leftmost block is
      placed at the top row of the container block and the right-side
      block is placed down by 1 row from the left-side block, then
      followed by $j_{k-1}-j$ blocks of $\bar{N}_L^{'(k-1,j_{k-1})}$
      placed by the same manner as $\bar{N}_L^{(k-1,j_{k-1})}$.
    \end{enumerate}
  \end{enumerate}
  Readers can find the structures of $\bar{N}^{(k,j)}(F,G)$ in the
  figures in Terui~\cite{ter2003}.  Then, $\bar{N}^{(k,j)}(F,G)$ is
  called the \emph{$(k,j)$-th recursive subresultant matrix} of $F$
  and $G$.  \qed
\end{definition}

\begin{proposition}
  \label{prop:recsubresmat}
  For $k=1,\ldots,t$ and $j<j_{k-1}-1$, the numbers of rows and
  columns of $\bar{N}^{(k,j)}(F,G)$, the $(k,j)$-th recursive
  subresultant matrix of $F$ and $G$ are
%%   \begin{equation}
%%     \label{eq:recsubresmatord}
%%     \begin{split}
%      & 
      $(m+n-2j_1)
      \left\{
        \prod_{l=2}^{k-1}(2j_{l-1}-2j_l-1)
      \right\}
      (2j_{k-1}-2j-1)
      +j$,
%      \\
%      &
      $(m+n-2j_1)
      \left\{
        \prod_{l=2}^{k-1}(2j_{l-1}-2j_l-1)
      \right\}
      (2j_{k-1}-2j-1)
      ,$
%%     \end{split}
%%   \end{equation}
  respectively.
  \qed
\end{proposition}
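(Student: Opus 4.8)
The plan is to prove the two formulas simultaneously by induction on $k$, following the recursive construction of $\bar{N}^{(k,j)}(F,G)$ in Definition~\ref{def:recsubresmat}. For the base case $k=1$, the matrix $\bar{N}^{(1,j)}(F,G)$ equals the ordinary subresultant matrix $N^{(j)}(F,G)$, which by the definition preceding Definition~\ref{def:recsubresmat} has $m+n-2j$ columns and $m+n-j$ rows. I would check that the claimed formulas specialize correctly when $k=1$: the product $\prod_{l=2}^{k-1}$ is empty (equal to $1$), and $j_0=m$, so the column count becomes $(m+n-2j_1)\cdot(2j_0-2j-1)$. Here a small subtlety arises: the product $(m+n-2j_1)$ in the stated formula already encodes the $k=2$ level, so I would instead read the column formula for general $k$ as ``$\bigl(\prod_{l=1}^{k}(\text{growth factor at level }l)\bigr)$'' with the level-$1$ factor being $m+n-2j$ directly; I would state this cleanly at the outset so the induction has a clean form.

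For the inductive step, assume the row and column counts of $\bar{N}^{(k-1,j_{k-1})}(F,G)$ are known. First I would compute the size of the building blocks: $\bar{N}_U^{(k-1,j_{k-1})}$ is obtained by deleting the bottom $j_{k-1}+1$ rows, so it has $(\text{rows of }\bar{N}^{(k-1,j_{k-1})})-(j_{k-1}+1)$ rows; by the inductive hypothesis this equals $(2j_{k-2}-2j_{k-1}-1)\cdot(\cdots) - 1$, and crucially it is a square matrix (I would verify rows $=$ columns here, since the diagonal-block structure of the upper block requires it). The block $\bar{N}_L^{(k-1,j_{k-1})}$ (resp. $\bar{N}_L^{\prime(k-1,j_{k-1})}$) has $j_{k-1}+1$ rows (resp. $j_{k-1}$ rows) and the same number of columns as $\bar{N}^{(k-1,j_{k-1})}$. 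Then from the description in part~2 of Definition~\ref{def:recsubresmat}: the upper block contributes $(j_{k-1}-j_k-1)$ diagonal copies of $\bar{N}_U$, and the lower block contributes $(j_{k-1}-j-1)$ copies of $\bar{N}_L$ followed by $(j_{k-1}-j)$ copies of $\bar{N}_L^{\prime}$. Counting columns: the total number of horizontal block-slots is governed by the upper block, giving $(j_{k-1}-j_k-1)$ copies of a block of width $w_{k-1}:=(\text{columns of }\bar{N}^{(k-1,j_{k-1})})$. Counting rows: the diagonal upper part gives $(j_{k-1}-j_k-1)$ times the height of $\bar{N}_U$, and the shifted lower blocks add the remaining rows, which after telescoping the shifts ($+1$ row offset per block) yields the extra $+j$ term. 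I would do this row count carefully and show it collapses to the stated expression.

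The main obstacle will be the row count in the lower block: because the $\bar{N}_L$ and $\bar{N}_L^{\prime}$ blocks are each shifted down by one row relative to the previous one, the total vertical extent is not simply a sum of block heights but involves the staircase overlap, and I must confirm that the accumulated ``$-1$'' corrections and the $j_{k-1}$-versus-$j$ bookkeeping combine to give exactly $+j$ (and not $+j_{k-1}$ or some off-by-one variant). I expect this to reduce, after substituting the inductive hypothesis for the row count of $\bar{N}^{(k-1,j_{k-1})}$ and using $n_{l_{k-1}}^{(k-1)}=j_{k-1}$, to an identity of the form $A\cdot(j_{k-1}-j_k-1) + (\text{staircase terms}) = A\cdot(2j_{k-1}-2j-1) + j$ where $A=(m+n-2j_1)\prod_{l=2}^{k-1}(2j_{l-1}-2j_l-1)$ is the column count of $\bar{N}^{(k-1,j_{k-1})}$ minus its trailing $+j_{k-1}$; verifying this is a routine but delicate algebraic manipulation, after which the column formula follows immediately and the induction closes. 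Since by hypothesis $j<j_{k-1}-1$, all the block-count quantities $j_{k-1}-j_k-1$, $j_{k-1}-j-1$, $j_{k-1}-j$ are nonnegative, so the construction and the counts are well-defined throughout.
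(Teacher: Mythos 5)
The paper states Proposition~\ref{prop:recsubresmat} without proof (it is carried over from \cite{ter2003}), so there is no in-paper argument to compare against; your strategy---induction on $k$ with a block-by-block count of the recursive construction---is the natural and presumably the intended one. But your outline has a genuine gap at the one step that actually decides the answer, namely the number of column-blocks. You let the upper block ``govern'' the width with $(j_{k-1}-j_k-1)$ diagonal copies of $\bar{N}_U^{(k-1,j_{k-1})}$ (reading clause 2(a) of Definition~\ref{def:recsubresmat} literally), while the lower block contributes $(j_{k-1}-j-1)+(j_{k-1}-j)=2j_{k-1}-2j-1$ column-blocks. These two counts must coincide for the upper and lower blocks to stack into a single matrix, and the stated column formula forces the common value $2j_{k-1}-2j-1$; the ``$j_k$'' in clause 2(a) has to be read as ``$j$''. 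You never reconcile the two, and your closing identity $A\cdot(j_{k-1}-j_k-1)+(\text{staircase terms})=A\cdot(2j_{k-1}-2j-1)+j$ is simply false for generic $j$ and $j_k$, because the staircase terms do not involve $A$: no ``routine but delicate algebraic manipulation'' can verify it.

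A second, related slip: $\bar{N}_U^{(k-1,j_{k-1})}$ is \emph{not} square, and the verification of squareness you announce would fail. By the inductive hypothesis $\bar{N}^{(k-1,j_{k-1})}$ has $C+j_{k-1}$ rows and $C$ columns, where $C=(m+n-2j_1)\prod_{l=2}^{k-1}(2j_{l-1}-2j_l-1)$, so deleting the bottom $j_{k-1}+1$ rows leaves a $(C-1)\times C$ matrix. This is not cosmetic: the deficit of one row per diagonal block is precisely what produces the ``$+j$'' rather than ``$+(2j_{k-1}-j-1)$'' in the row count. With both points corrected the induction closes in one line: columns $=(2j_{k-1}-2j-1)\,C$, and rows $=(2j_{k-1}-2j-1)(C-1)+(2j_{k-1}-j-1)=(2j_{k-1}-2j-1)\,C+j$, since both the run of $\bar{N}_L^{(k-1,j_{k-1})}$ blocks (height $j_{k-1}+1$, shifted down one row each) and the run of $\bar{N}_L^{\prime(k-1,j_{k-1})}$ blocks (height $j_{k-1}$) span rows $1$ through $2j_{k-1}-j-1$ of the lower block. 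Your treatment of the base case, including the observation that the displayed formula only specializes correctly for $k\ge 2$ (consistent with $u_1=m+n-2j_1$ being set separately in Theorem~\ref{th:res-nes-subres-equiv}), is correct.
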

\begin{definition}[Recursive Subresultant]
  \label{def:recsubres}
  Let $F$ and $G$ be defined as in \textup{(\ref{eq:fg})}, and let
  $(P_1^{(1)},\ldots,$
  $P_{l_1}^{(1)},\ldots,P_1^{(t)},\ldots,P_{l_t}^{(t)})$ be complete
  recursive PRS for $F$ and $G$ as in Definition~\ref{def:recprs}.
  For $j=j_{k-1}-2,\ldots,0$ and $\tau=j,\ldots,0$, let
  $\bar{N}_\tau^{(k,j)}=M_\tau^{(k,j)}(F,G)$ be a sub-matrix of the
  $(k,j)$-th recursive subresultant matrix \linebreak
  $\bar{N}^{(k,j)}(F,G)$ obtained by taking the top
  $(m+n-2j_1)\times\{\prod_{l=2}^{k-1}(2j_{l-1}-2j_l-1)\}(2j_{k-1}-2j-1)-1$
  rows and the
  $((m+n-2j_1)\{\prod_{l=2}^{k-1}(2j_{l-1}-2j_l-1)\}(2j_{k-1}-2j-1)
  +j-\tau)$-th row (note that $\bar{N}_\tau^{(k,j)}$ is a square
  matrix).  Then, the polynomial
%  \begin{equation}
    $\recsubres_{k,j}(F,G)
    =|\bar{N}_j^{(k,j)}|x^j+\cdots+|\bar{N}_0^{(k,j)}|x^0$
%  \end{equation}
  is called the \emph{$(k,j)$-th recursive subresultant} of
  $F$ and $G$. \qed
\end{definition}

\section{Nested Subresultants}
\label{sec:nessubres}

Although the recursive subresultant can represent the coefficients of
the elements in recursive PRS, the size of the recursive subresultant
matrix becomes larger rapidly as the recursion depth of the recursive
PRS becomes deeper, hence making use of the recursive subresultant
matrix become more inefficient.

To overcome this problem, we introduce other representations for the
subresultant which is equivalent to the recursive subresultant up to a
constant, and more efficient to calculate.  The nested subresultant
matrix is a subresultant matrix whose elements are again determinants
of certain subresultant matrices (or even the nested subresultant
matrices), and the nested subresultant is a subresultant whose
coefficients are determinants of the nested subresultant matrices.

In this paper, the nested subresultant is mainly used to show the
relationship between the recursive subresultant and the reduced nested
subresultant, which is defined in the next section.  

\begin{definition}[Nested Subresultant Matrix]
  Let $F$ and $G$ be defined as in (\ref{eq:fg}), and let
  $(P_1^{(1)},\ldots,P_{l_1}^{(1)},\ldots,P_1^{(t)},\ldots,P_{l_t}^{(t)})$
  be complete recursive PRS for $F$ and $G$ as in
  Definition~\ref{def:recprs}.  Then, for each tuple of numbers
  $(k,j)$ with $k=1,\ldots,t$ and $j=j_{k-1}-2,\ldots,0$, define
  matrix $\tilde{N}^{(k,j)}(F,G)$ recursively as follows.
  \begin{enumerate}
  \item For $k=1$, let $\tilde{N}^{(1,j)}(F,G)=N^{(j)}(F,G)$.
  \item For $k>1$ and $\tau=0,\ldots,j_{k-1}$, let
    $\tilde{N}_\tau^{(k-1,j_{k-1})}$ be a sub-matrix of
    $\tilde{N}^{(k-1,j_{k-1})}$ by taking the top
    $(n_1^{(k-1)}+n_2^{(k-1)}-2j_{k-1}-1)$ rows and the
    $(n_1^{(k-1)}+n_2^{(k-1)}-j_{k-1}-\tau)$-th row (note that
    $\tilde{N}_\tau^{(k-1,j_{k-1})}$ is a square matrix).  Now, let
    \begin{equation}
      \tilde{N}^{(k,j)}(F,G)=
      N^{(j)}
      \left(
        \nessubres_{k-1,j_{k-1}}(F,G),
        \frac{d}{dx}\tilde{\subres}_{k-1,j_{k-1}}(F,G)
      \right),
    \end{equation}
    where $\tilde{\subres}_{k-1,j_{k-1}}(F,G)$ is defined by
    Definition~\ref{def:nessubres}.
    Then, $\tilde{N}^{(k,j)}(F,G)$ is called the \emph{$(k,j)$-th
      nested subresultant matrix of $F$ and $G$}.  \qed
  \end{enumerate}
\end{definition}

\begin{definition}[Nested Subresultant]
  \label{def:nessubres}
  Let $F$ and $G$ be defined as in \textup{(\ref{eq:fg})}, and let
  $(P_1^{(1)},\ldots,$
  $P_{l_1}^{(1)},\ldots,P_1^{(t)},\ldots,P_{l_t}^{(t)})$ be complete
  recursive PRS for $F$ and $G$ as in Definition~\ref{def:recprs}.
  For $j=j_{k-1}-2,\ldots,0$ and $\tau=j,\ldots,0$, let
  $\tilde{N}_\tau^{(k,j)}=\tilde{N}_\tau^{(k,j)}(F,G)$ be a sub-matrix
  of the $(k,j)$-th nested recursive subresultant matrix
  $\tilde{N}^{(k,j)}(F,G)$ obtained by taking the top
  $n_1^{(k)}+n_2^{(k)}-2j-1$ rows and the
  $(n_1^{(k)}+n_2^{(k)}-j-\tau)$-th row (note that
  $\tilde{N}_\tau^{(k,j)}$ is a square matrix).  Then, the polynomial
  \(
%  \begin{equation}
    \nessubres_{k,j}(F,G)
    =|\tilde{N}_j^{(k,j)}|x^j+\cdots+|\tilde{N}_0^{(k,j)}|x^0
%  \end{equation}
    \)
  is called the \emph{$(k,j)$-th nested subresultant} of
  $F$ and $G$. \qed
\end{definition}

We show that the nested subresultant is equal to the recursive
subresultant up to a sign. 

\begin{theorem}
  \label{th:res-nes-subres-equiv}
  Let $F$ and $G$ be defined as in \textup{(\ref{eq:fg})}, and let
  $(P_1^{(1)},\ldots,P_{l_1}^{(1)},\ldots,P_1^{(t)},$
  $\ldots,P_{l_t}^{(t)})$
  be complete recursive PRS for $F$ and $G$ as in
  Definition~\ref{def:recprs}.  
  For $k=2,\ldots,t$ and
  $j=j_{k-1}-2,\ldots,0$, define $u_{k,j}$, $b_{k,j}$, $r_{k,j}$ and
  $R_k$ as follows:
  let 
  $u_{k,j} = (m+n-2j_1)
  \left\{
    \prod_{l=2}^{k-1}(2j_{l-1}-2j_l-1)
  \right\}
  (2j_{k-1}-2j-1)
  $ with $u_k=u_{k,j_k}$ and $u_1=m+n-2j_1$,
  $b_{k,j}=2j_{k-1}-2j-1$ with
  $b_k=b_{k,j_k}$ and $b_1=1$, 
  $r_{k,j}=(-1)^{(u_{k-1}-1)(1+2+\cdots+(b_{k,j}-1))}$ with
  $r_k=r_{k,j_k}$ and $r_{1,j}=1$ for $j<n$,
  and $R_k=(R_{k-1})^{b_k}r_k$ with $R_0=R_1=1$.
  Then, we have
  \begin{equation}
    \nessubres_{k,j}(F,G)=(R_{k-1})^{b_{k,j}}\; r_{k,j}\;
    \recsubres_{k,j}(F,G). 
  \end{equation}
\end{theorem}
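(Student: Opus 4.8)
The plan is to prove the identity by induction on $k$, with the inductive step reduced to a single determinant identity that compares the ``blown-up'' recursive subresultant matrix with an ordinary subresultant matrix built from the level-$(k-1)$ subresultant and its derivative. For $k=1$ there is nothing to prove: $\tilde N^{(1,j)}(F,G)=\bar N^{(1,j)}(F,G)=N^{(j)}(F,G)$ by definition, so $\nessubres_{1,j}(F,G)=\recsubres_{1,j}(F,G)$, in agreement with the asserted formula since $R_0=R_1=1$, $b_{1,j}=1$ and $r_{1,j}=1$.

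Now fix $k\ge 2$ and assume the theorem at level $k-1$. Applying it with the index $j=j_{k-1}$ (which lies in the admissible range $j_{k-2}-2,\dots,0$, so that $\tilde N^{(k,j)}$ is well defined) gives
\[
\nessubres_{k-1,j_{k-1}}(F,G)=(R_{k-2})^{b_{k-1}}\,r_{k-1}\,\recsubres_{k-1,j_{k-1}}(F,G)=R_{k-1}\,\recsubres_{k-1,j_{k-1}}(F,G).
\]
Write $P:=\recsubres_{k-1,j_{k-1}}(F,G)$, of degree $n_1^{(k)}=j_{k-1}$, with $\frac{d}{dx}P$ of degree $n_2^{(k)}=j_{k-1}-1$. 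Since $N^{(j)}(\,\cdot\,,\,\cdot\,)$ is linear in each of its polynomial arguments and $\frac{d}{dx}$ is linear, every column of $\tilde N^{(k,j)}(F,G)=N^{(j)}\bigl(R_{k-1}P,\frac{d}{dx}(R_{k-1}P)\bigr)$ equals $R_{k-1}$ times the corresponding column of $N^{(j)}\bigl(P,\frac{d}{dx}P\bigr)$, a matrix with $n_1^{(k)}+n_2^{(k)}-2j=2j_{k-1}-2j-1=b_{k,j}$ columns. Hence for each $\tau=j,\dots,0$ the $b_{k,j}\times b_{k,j}$ submatrix $\tilde N_\tau^{(k,j)}$ satisfies $|\tilde N_\tau^{(k,j)}|=(R_{k-1})^{b_{k,j}}\,|M_\tau|$, where $M_\tau$ is the submatrix of $N^{(j)}\bigl(P,\frac{d}{dx}P\bigr)$ occupying the same rows and columns that $\tilde N_\tau^{(k,j)}$ occupies in $\tilde N^{(k,j)}$.

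It therefore suffices to prove the core identity $|\bar N_\tau^{(k,j)}|=r_{k,j}\,|M_\tau|$ for $\tau=j,\dots,0$; combining it with the previous display (and $r_{k,j}^2=1$) and summing $|\tilde N_\tau^{(k,j)}|\,x^{\tau}=(R_{k-1})^{b_{k,j}}r_{k,j}|\bar N_\tau^{(k,j)}|\,x^{\tau}$ over $\tau$ then gives the theorem. To prove it I would unfold the block structure of $\bar N^{(k,j)}(F,G)$ from Definition~\ref{def:recsubresmat}: after a permutation of rows, $\bar N_\tau^{(k,j)}$ is a square matrix whose top part is block-diagonal with copies of $\bar N_U^{(k-1,j_{k-1})}$ (each of size $(u_{k-1}-1)\times u_{k-1}$; Proposition~\ref{prop:recsubresmat} forces there to be $b_{k,j}$ of them, since $u_{k,j}=b_{k,j}\,u_{k-1}$), and whose bottom part is a band of $b_{k,j}$ rows assembled, in the staircase pattern of a Sylvester matrix, from the blocks $\bar N_L^{(k-1,j_{k-1})}$ and $\bar N_L^{'(k-1,j_{k-1})}$. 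Expanding $|\bar N_\tau^{(k,j)}|$ by the generalized Laplace rule along the $b_{k,j}(u_{k-1}-1)$ rows of the diagonal part, only column selections omitting exactly one column from each width-$u_{k-1}$ block contribute; the surviving $(u_{k-1}-1)$-minor of the $s$-th block is a signed cofactor of the matrix obtained from $\bar N_U^{(k-1,j_{k-1})}$ by adjoining one extra bottom row, and --- by the very definitions of $\recsubres_{k-1,j_{k-1}}(F,G)$ and of $\bar N_L^{'(k-1,j_{k-1})}$ (the operation ``multiply the $(j_{k-1}+1-\tau)$-th row by $\tau$, then delete the bottom row'') --- these cofactors combine the rows of $\bar N_L^{(k-1,j_{k-1})}$ and $\bar N_L^{'(k-1,j_{k-1})}$ into precisely the coefficients of $P$ and of $\frac{d}{dx}P$. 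Multilinearity of the determinant in the $b_{k,j}$ ``collapsed'' columns then reassembles the right-hand side as $|M_\tau|$, up to one $\tau$-independent sign, namely the sign of the permutation that de-interleaves the rows of the block-diagonal part; this permutation carries the $s$-th group of $u_{k-1}-1$ rows past $(s-1)$ band rows, so its sign is $(-1)^{(u_{k-1}-1)(1+2+\cdots+(b_{k,j}-1))}=r_{k,j}$, which closes the induction.

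The hard part is the sign accounting in this last step. One must reconcile (i) the Laplace sign attached to each admissible column selection, (ii) the intra-block cofactor signs that turn the block minors of $\bar N_U^{(k-1,j_{k-1})}$ into the coefficients of $P$ and $\frac{d}{dx}P$ in their correct positions, and (iii) the single de-interleaving permutation, and check that everything except (iii) cancels, leaving exactly $r_{k,j}$. A secondary technical point is to verify that, once collapsed, the staircase of $\bar N_L^{(k-1,j_{k-1})}$- and $\bar N_L^{'(k-1,j_{k-1})}$-blocks lands on $N^{(j)}\bigl(P,\frac{d}{dx}P\bigr)$ with its rows and columns ordered exactly as in the definition of the nested subresultant matrix $\tilde N^{(k,j)}(F,G)$, so that ``the same rows and columns'' really does identify the same index sets in the two settings; granting this, the remainder is routine determinant manipulation.
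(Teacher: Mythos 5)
Your proposal is correct in outline and shares the paper's inductive skeleton (induction on $k$, both subresultants compared through a common $b_{k,j}\times b_{k,j}$ determinant at level $k$), but the mechanics of the inductive step are genuinely different. The paper proves the coefficient-wise lemma $|\tilde{N}_\tau^{(k,j)}|=(R_{k-1})^{b_{k,j}}r_{k,j}|\bar{N}_\tau^{(k,j)}|$ by column eliminations and exchanges on $\tilde{N}^{(k-1,j_{k-1})}$ and $\bar{N}^{(k-1,j_{k-1})}$ that expose a common square block $\tilde{W}_{k-1}$ (resp.\ $\bar{W}_{k-1}$) together with the coefficient vectors of the actual PRS elements $P_1^{(k)},P_2^{(k)}$; both sides are then reduced to $|\tilde{W}_{k-1}|^{b_{k,j}}\,|N_\tau^{(j)}(P_1^{(k)},P_2^{(k)})|$, and the whole block-collapse of $\bar{N}^{(k,j)}$, including the sign $r_{k,j}$, is imported from \cite{ter2003}. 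You instead keep $P=\recsubres_{k-1,j_{k-1}}$ itself as the reference object: on the nested side you use only the polynomial-level induction hypothesis $\nessubres_{k-1,j_{k-1}}=R_{k-1}P$ plus multilinearity in the $b_{k,j}$ columns, which is cleaner than the paper's elimination argument and avoids introducing $W_{k-1}$ and the PRS elements altogether; on the recursive side everything is concentrated in the identity $|\bar{N}_\tau^{(k,j)}|=r_{k,j}\,|N_\tau^{(j)}(P,\tfrac{d}{dx}P)|$, which you propose to establish by a generalized Laplace expansion along the block-diagonal rows. That identity is precisely the content the paper delegates to \cite{ter2003}, so your route is self-contained where the paper is not, at the price of doing the combinatorics yourself; your sketch of it (one omitted column per width-$u_{k-1}$ block, cofactors reassembling into coefficients of $P$ and $\tfrac{d}{dx}P$ via the rows of $\bar{N}_L^{(k-1,j_{k-1})}$ and $\bar{N}_L^{'(k-1,j_{k-1})}$, and the de-interleaving permutation contributing $(-1)^{(u_{k-1}-1)(1+\cdots+(b_{k,j}-1))}=r_{k,j}$) is the right computation and correctly locates the origin of $r_{k,j}$, though the claimed cancellation of the Laplace and intra-block cofactor signs is asserted rather than verified. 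Two minor caveats: invoking the theorem at level $k-1$ with $j=j_{k-1}$ tacitly assumes $j_{k-1}\le j_{k-2}-2$ (an assumption the paper also makes implicitly in defining $\tilde{N}^{(k,j)}$), and you should confirm that $\deg\recsubres_{k-1,j_{k-1}}=j_{k-1}$ so that $N^{(j)}(P,\tfrac{d}{dx}P)$ has the formal degrees $n_1^{(k)},n_2^{(k)}$ used in the definitions.
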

To prove Theorem~\ref{th:res-nes-subres-equiv}, we prove the
following lemma.
\begin{lemma}
  For $k=1,\ldots,t$, $j=j_{k-1}-2,\ldots,0$ and $\tau=j,\ldots,0$, we
  have
  \begin{equation}
    \label{eq:res-nes-subres-equiv}
    |\tilde{N}_\tau^{(k,j)}(F,G)|=(R_{k-1})^{b_{k,j}}\; r_{k,j}\;
    |\bar{N}_\tau^{(k,j)}(F,G)|.
  \end{equation}
\end{lemma}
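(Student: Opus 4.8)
The plan is to proceed by induction on the recursion depth $k$. The base case $k=1$ is immediate: by definition $\tilde{N}^{(1,j)}(F,G)=N^{(j)}(F,G)=\bar{N}^{(1,j)}(F,G)$, so the matrices $\tilde{N}_\tau^{(1,j)}$ and $\bar{N}_\tau^{(1,j)}$ coincide, and since $R_0=1$, $b_{1,j}=1$, $r_{1,j}=1$, the identity \eqref{eq:res-nes-subres-equiv} holds trivially. For the inductive step, assume \eqref{eq:res-nes-subres-equiv} holds for $k-1$ and all relevant $j,\tau$; in particular, applying it with $j$ replaced by $j_{k-1}$ and $\tau$ replaced by each value $\tau=0,\ldots,j_{k-1}$, we get $\nessubres_{k-1,j_{k-1}}(F,G)=(R_{k-2})^{b_{k-1}}\,r_{k-1}\,\recsubres_{k-1,j_{k-1}}(F,G)=R_{k-1}\,\recsubres_{k-1,j_{k-1}}(F,G)$ after collecting the recursively defined constants, using $R_{k-1}=(R_{k-2})^{b_{k-1}}r_{k-1}$.

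Next I would compare the two matrix constructions at level $k$. On the recursive side, $\bar{N}^{(k,j)}(F,G)$ is the block matrix of Definition~\ref{def:recsubresmat}, assembled from the upper block $\bar{N}_U^{(k-1,j_{k-1})}$ (the recursive subresultant matrix with its bottom $j_{k-1}+1$ rows removed) and the lower blocks $\bar{N}_L^{(k-1,j_{k-1})}$ and $\bar{N}_L^{'(k-1,j_{k-1})}$ (built from those bottom $j_{k-1}+1$ rows, with the derivative-weighting by $\tau$ and the deletion of the last row). On the nested side, $\tilde{N}^{(k,j)}(F,G)=N^{(j)}(\nessubres_{k-1,j_{k-1}},\frac{d}{dx}\nessubres_{k-1,j_{k-1}})$ is an ordinary subresultant matrix of the nested subresultant polynomial and its derivative. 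The key structural observation — which should already be implicit in the figures of Terui~\cite{ter2003} — is that $\bar{N}^{(k,j)}(F,G)$ is, up to row operations, exactly the subresultant matrix $N^{(j)}(\recsubres_{k-1,j_{k-1}},\frac{d}{dx}\recsubres_{k-1,j_{k-1}})$ with each "coefficient slot" expanded into a copy of the block $\bar{N}^{(k-1,j_{k-1})}$ rather than a scalar; the weighting by $\tau$ and the deletion of the bottom row in the $\bar{N}_L'$ blocks is precisely what implements differentiation at the block level. I would make this precise by repeatedly applying the Sylvester (or Laplace/Desnanot–Jacobi) expansion along the repeated diagonal copies of $\bar{N}_U^{(k-1,j_{k-1})}$: each such expansion contributes a factor $|\tilde{N}_\tau^{(k-1,j_{k-1})}|$ (a coefficient of $\nessubres_{k-1,j_{k-1}}$, or of its derivative) while simultaneously producing the sign $(-1)^{(u_{k-1}-1)(1+2+\cdots+(b_{k,j}-1))}=r_{k,j}$ from commuting the $b_{k,j}-1$ blocks of size $u_{k-1}$ past one another, and, via the induction hypothesis, the scalar factor $(R_{k-2})^{b_{k-1}}r_{k-1}$ from each of the $b_{k,j}$ block-determinants, i.e. $(R_{k-1})^{b_{k,j}}$ in total. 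Collecting these, $|\tilde{N}_\tau^{(k,j)}(F,G)|=(R_{k-1})^{b_{k,j}}r_{k,j}|\bar{N}_\tau^{(k,j)}(F,G)|$, which is what was claimed; Theorem~\ref{th:res-nes-subres-equiv} then follows by reading off the coefficient of $x^\tau$ for $\tau=j,\ldots,0$.

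The main obstacle will be bookkeeping the sign $r_{k,j}$ and the exact matching of the block structure of $\bar{N}^{(k,j)}$ with the scalar structure of $N^{(j)}(\recsubres_{k-1,j_{k-1}},\frac{d}{dx}\recsubres_{k-1,j_{k-1}})$. One must verify that the number of diagonal copies of $\bar{N}_U^{(k-1,j_{k-1})}$ in the upper block, namely $j_{k-1}-j_k-1$ in the definition but in general $b_{k,j}-1$ wait — more precisely the count of $\bar{N}_L$ blocks is $b_{k,j}-1 = 2j_{k-1}-2j-1$ minus the $j_{k-1}-j$ weighted blocks — accounts exactly for the $n-j$ and $m-j$ column counts of an ordinary subresultant matrix applied to polynomials of degrees $n_1^{(k)}$ and $n_2^{(k)}=n_1^{(k)}-1$, and that the sign accrued when permuting the $u_{k-1}$-row blocks into block-triangular form to expand the determinant is indeed $(u_{k-1}-1)(1+2+\cdots+(b_{k,j}-1))$ modulo $2$. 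I expect this to reduce, after the structural identification, to the standard proof that the Sylvester-identity / Gaussian-elimination reduction of a "blown-up" Sylvester matrix recovers the ordinary Sylvester matrix of the reduced entries times the appropriate power of the pivot determinant — the novelty here being only that the "entries" are themselves subresultant coefficients and that one of the polynomials is a formal derivative, handled by the $\tau$-weighting of rows.
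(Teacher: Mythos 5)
Your overall architecture---induction on $k$, base case $k=1$ trivial, inductive step feeding the level-$(k-1)$ identity into the level-$k$ matrices---matches the paper's, and your accounting for the factor $(R_{k-1})^{b_{k,j}}$ (one factor of $R_{k-1}$ per column of $N^{(j)}_\tau(\nessubres_{k-1,j_{k-1}},\frac{d}{dx}\nessubres_{k-1,j_{k-1}})$, of which there are $b_{k,j}$) is sound. The gap is in your central step, where you evaluate $|\bar{N}_\tau^{(k,j)}|$ by ``repeatedly applying the Sylvester (or Laplace) expansion along the repeated diagonal copies of $\bar{N}_U^{(k-1,j_{k-1})}$,'' each expansion ``contributing a factor'' equal to a coefficient of $\recsubres_{k-1,j_{k-1}}$. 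This is not a valid manipulation as stated: $\bar{N}_U^{(k-1,j_{k-1})}$ is a \emph{rectangular} block (it has $u_{k-1}-1$ rows and $u_{k-1}$ columns), so it has no determinant to factor out, and the determinant of a staggered block matrix of this kind is not in general the determinant of the matrix of sub-block determinants. A generalized Laplace expansion along the rows occupied by the diagonal copies produces a sum over all complementary column selections, and showing that only the ``aligned'' terms survive is exactly the nontrivial content you would need to supply. The Sylvester identity you invoke applies to a matrix of scalar determinants sharing a common leading corner---that is the situation in the paper's \emph{second} lemma (Lemma~\ref{le:nessubres-rednessubres-equiv}, for the reduced nested subresultant), not here.

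The paper closes this step differently: rather than expanding $\bar{N}^{(k,j)}$ into level-$(k-1)$ subresultant coefficients, it performs column eliminations on each copy of $\tilde{N}^{(k-1,j_{k-1})}$ and $\bar{N}^{(k-1,j_{k-1})}$ that bring them to the form $\bigl(\begin{smallmatrix} W_{k-1} & O \\ * & \bm{p}^{(k)}_1 \end{smallmatrix}\bigr)$, isolating the coefficient vector of the gcd $P_1^{(k)}=P_{l_{k-1}}^{(k-1)}$ in the last column; after these eliminations and the column exchanges that produce the sign $r_{k,j}$, \emph{both} $|\tilde{N}_\tau^{(k,j)}|$ and $(R_{k-1})^{b_{k,j}}r_{k,j}|\bar{N}_\tau^{(k,j)}|$ are shown to equal the common quantity $|W_{k-1}|^{n_1^{(k)}+n_2^{(k)}-2j}\,|N_\tau^{(j)}(P_1^{(k)},P_2^{(k)})|$, i.e.\ everything is reduced to the ordinary subresultant of the actual PRS elements rather than to $\recsubres_{k-1,j_{k-1}}$. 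If you want to keep your route, you must prove the identity $|N_\tau^{(j)}(\recsubres_{k-1,j_{k-1}},\frac{d}{dx}\recsubres_{k-1,j_{k-1}})|=r_{k,j}\,|\bar{N}_\tau^{(k,j)}|$ as a separate lemma, and its proof would require essentially the same elimination-and-permutation argument; as written, your appeal to a block expansion does not establish it.
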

\begin{proof}
  By induction on $k$.  For $k=1$, it is obvious by the definitions of
  the recursive and the nested subresultants.  Assume that
%  (\ref{eq:res-nes-subres-equiv})
  the lemma is valid for $1,\ldots,k-1$.  Then,
  for $\tau=j_{k-1},\ldots,0$, we have
  $|\tilde{N}_\tau^{(k-1,j_{k-1})}|=
  R_{k-1}|\bar{N}_\tau^{(k-1,j_{k-1})}|$. 
  For an element in recursive PRS $P_i^{(k)}(x)$, expressed as
%  \begin{equation}
    $P_i^{(k)}(x)=a^{(k)}_{i,n_i^{(k)}}x^{n_i^{(k)}}+\cdots
    +a^{(k)}_{i,0}x^0$,
%  \end{equation}
  denote the coefficient vector for $P_i^{(k)}(x)$ by
%  \begin{equation}
    $\bm{p}^{(k)}_i={}^t(a^{(k)}_{i,n_i^{(k)}},\ldots,a^{(k)}_{i,0})$.
%  \end{equation}
  Then, there exist certain eliminations and
  exchanges on columns which transform $\tilde{N}^{(k-1,j_{k-1})}$ to
%  \begin{equation}
    $\tilde{M}^{(k-1,j_{k-1})}=
    \small
    \left(
      \begin{array}{c|c}
        \tilde{W}_{k-1} & O \\
        \hline
        * & \bm{p}^{(k-1)}_{l_{k-1}}
      \end{array}
    \right)
    =
    \left(
      \begin{array}{c|c}
        \tilde{W}_{k-1} & O \\
        \hline
        * & \bm{p}^{(k)}_1
      \end{array}
    \right),$
%  \end{equation}
  such that, for $\tau=j,\ldots,0$, we have 
%  \begin{equation}
%    \begin{split}
      $\tilde{M}_\tau^{(k-1,j_{k-1})}
%      &
      =
      \small
      \left(
        \begin{array}{c|c}
          \tilde{W}_{k-1} & O \\
          \hline
          * & a^{(k-1)}_{l_{k-1},\tau}
        \end{array}
      \right)
      =
      \left(
        \begin{array}{c|c}
          \tilde{W}_{k-1} & O \\
          \hline
          * & a^{(k)}_{1,\tau}
        \end{array}
      \right)$
%      ,
%      \\
      with
      $|\tilde{M}_\tau^{(k-1,j_{k-1})}|
%      &
      =|\tilde{W}_{k-1}|a_{1,\tau}^{(k)},$
%    \end{split}
%  \end{equation}
  where $\tilde{M}_\tau^{(k-1,j_{k-1})}$ is a sub-matrix of
  $\tilde{M}^{(k-1,j_{k-1})}$ by taking the top
  $n_1^{(k-1)}+n_2^{(k-1)}-2j_{k-1}-1$ rows and the
  $(n_1^{(k-1)}+n_2^{(k-1)}-j_{k-1}-\tau)$-th row (note that the
  matrix $\tilde{W}_{k-1}$ is a square matrix of order
  $n_1^{(k-1)}+n_2^{(k-1)}-2j_{k-1}-1$).
  By the definition of $\tilde{N}^{(k,j)}(F,G)$, we have
%  \begin{equation}
    $\tilde{N}^{(k,j)}(F,G)=|\tilde{W}_{k-1}|
    N^{(j)}(P_1^{(k)},P_2^{(k)})$,
%  \end{equation}
  hence we have 
  \begin{equation}
    |\tilde{N}_\tau^{(k,j)}(F,G)|=|\tilde{W}_{k-1}|^{n_1^{(k)}+n_2^{(k)}-2j}
    |N_\tau^{(j)}(P_1^{(k)},P_2^{(k)})|.
  \end{equation}

  On the other hand, there exist similar transformation which
  transforms $\bar{N}^{(k-1,j_{k-1})}$ and
  $
  \small
    \left(
    \begin{array}{c}
      \bar{N}_U^{(k-1,j_{k-1})} \\
      \hline
      \bar{N}_L^{'(k-1,j_{k-1})}
    \end{array}
    \right)
  $ 
  into
  $
      \bar{M}^{(k-1,j_{k-1})} 
      =
      \small
      \left(
        \begin{array}{c|c}
          \bar{W}_{k-1} & O \\
          \hline
          * & \bm{p}^{(k-1)}_{l_{k-1}}
        \end{array}
      \right)
      =
      \left(
        \begin{array}{c|c}
          \bar{W}_{k-1} & O \\
          \hline
          * & \bm{p}^{(k)}_1
        \end{array}
      \right)
      $
   and
   $
      \bar{M}^{'(k-1,j_{k-1})} 
      =
      \small
      \left(
        \begin{array}{c|c}
          \bar{W}_{k-1} & O \\
          \hline
          * & \bm{p}^{(k)}_2
        \end{array}
      \right), $ respectively, with
      $|\bar{W}_{k-1}|=|\tilde{W}_{k-1}|$ by assumption.  Therefore,
      by exchanges of columns after the above transformations on each
      column blocks (see Terui \cite{ter2003} for detail), we have
  \begin{equation}
    \begin{split}
      (R_{k-1})^{b_{k,j}}\; r_{k,j}\;
      |\bar{N}_\tau^{(k,j)}(F,G)|&=|\bar{W}_{k-1}|^{n_1^{(k)}+n_2^{(k)}-2j}
      |N_\tau^{(j)}(P_1^{(k)},P_2^{(k)})|
      \\
      &=|\tilde{W}_{k-1}|^{n_1^{(k)}+n_2^{(k)}-2j}
      |N_\tau^{(j)}(P_1^{(k)},P_2^{(k)})|
      \\
      &
      =|\tilde{N}_\tau^{(k,j)}(F,G)|,
    \end{split}
  \end{equation}
  which proves the lemma.
  \qed
\end{proof}

\section{Reduced Nested Subresultants}
\label{sec:rednessubres}

The nested subresultant matrix has ``nested'' representation of
subresultant matrices, which makes practical use difficult.  However,
in some cases, by Gaussian elimination of the matrix with the
Sylvester's identity after some pre-computations, we can reduce the
representation of the nested subresultant matrix to ``flat''
representation, or a representation without nested determinants; this
is the reduced nested subresultant (matrix).
As we will see, the size of the reduced nested subresultant matrix
becomes much smaller than that of the recursive subresultant matrix.

First, we show the Sylvester's identity (see also Bariess
\cite{bar1968}), then explain the idea of reduction of the nested
subresultant matrix with the Sylvester's identity by an example.
\begin{lemma}[The Sylvester's Identity]
  \label{lem:sylvester}
  Let 
  $A=(a_{ij})$ be $n\times n$ matrix,
%%     \begin{pmatrix}
%%       a_{11} & \cdots & a_{1n} \\
%%       \vdots &        & \vdots \\
%%       a_{n1} & \cdots & a_{nn}
%%     \end{pmatrix}
%%   $,
  and, for $k=1,\ldots,n-1$, $i=k+1,\ldots,n$ and $j=k+1,\ldots,n$,
  let $a_{i,j}^{(k)}=
  \small
    \begin{vmatrix}
      a_{11} & \cdots & a_{1k} & a_{1j} \\
      \vdots &        & \vdots & \vdots \\
      a_{k1} & \cdots & a_{kk} & a_{kj} \\
      a_{i1} & \cdots & a_{ik} & a_{ij} \\
    \end{vmatrix}
    $.
  Then, we have
  $
  |A|
  \left(
    a_{kk}^{(k-1)}
  \right)^{n-k-1}
  =
  \small
    \begin{vmatrix}
      a_{k+1,k+1}^{(k)} & \cdots & a_{k+1,n}^{(k)} \\
      \vdots & & \vdots \\
      a_{n,k+1}^{(k)} & \cdots & a_{n,n}^{(k)}
    \end{vmatrix}
    .
    $
  \qed
\end{lemma}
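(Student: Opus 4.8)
The plan is to prove the identity as a universal polynomial identity in the $n^{2}$ entries $a_{ij}$ (adopting the convention $a_{ij}^{(0)}=a_{ij}$ needed for the $k=1$ instance): both sides of the asserted equation are polynomials in the $a_{ij}$ with integer coefficients, so it suffices to verify it over the field of fractions $\mathbb{Q}(a_{ij})$ of the polynomial ring $\mathbb{Z}[a_{ij}]$ with the $a_{ij}$ treated as independent indeterminates; the equality then persists under any ring homomorphism, in particular under specialization to the actual entries of $A$ in $R$. The point of this generic setting is that there the leading principal $k\times k$ submatrix of $A$ is invertible, which is the only extra hypothesis the computation needs (and for $k=n-1$ the claim is already the tautology $|A|=a_{nn}^{(n-1)}$).

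First I would fix $k$ with $1\le k\le n-1$ and, over the fraction field, partition $A=\bigl(\begin{smallmatrix}P&Q\\R&S\end{smallmatrix}\bigr)$ with $P=(a_{ij})_{1\le i,j\le k}$ the leading $k\times k$ block, $Q$ of size $k\times(n-k)$, $R$ of size $(n-k)\times k$, and $S$ of size $(n-k)\times(n-k)$. Unwinding the superscript notation, $a_{kk}^{(k-1)}$ is the determinant of the submatrix of $A$ on rows $1,\dots,k-1,k$ and columns $1,\dots,k-1,k$, i.e.\ $a_{kk}^{(k-1)}=|P|$, so $P$ is invertible. Block Gaussian elimination --- subtracting $RP^{-1}$ times the top block of rows from the bottom block --- preserves $|A|$ and turns $A$ into $\bigl(\begin{smallmatrix}P&Q\\O&S-RP^{-1}Q\end{smallmatrix}\bigr)$, whence $|A|=|P|\cdot\bigl|\,S-RP^{-1}Q\,\bigr|$.

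The key step is to recognize the entries of the Schur complement $S-RP^{-1}Q$ as the numbers $a_{ij}^{(k)}$. For $i,j>k$ let $\bm{q}_j={}^{t}(a_{1j},\dots,a_{kj})$ and $\bm{r}_i=(a_{i1},\dots,a_{ik})$; then $a_{ij}^{(k)}$ is exactly the determinant of the bordered $(k+1)\times(k+1)$ matrix $\bigl(\begin{smallmatrix}P&\bm{q}_j\\\bm{r}_i&a_{ij}\end{smallmatrix}\bigr)$, and the same block elimination applied to that matrix gives $a_{ij}^{(k)}=|P|\,(a_{ij}-\bm{r}_iP^{-1}\bm{q}_j)$. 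Since the $(i,j)$ entry of $S-RP^{-1}Q$ is precisely $a_{ij}-\bm{r}_iP^{-1}\bm{q}_j=a_{ij}^{(k)}/|P|$, pulling $1/|P|$ out of each of its $n-k$ rows gives $\bigl|\,S-RP^{-1}Q\,\bigr|=|P|^{-(n-k)}\bigl|(a_{ij}^{(k)})_{k<i,j\le n}\bigr|$. Combining this with $|A|=|P|\cdot\bigl|\,S-RP^{-1}Q\,\bigr|$ yields $|A|=|P|^{-(n-k-1)}\bigl|(a_{ij}^{(k)})_{k<i,j\le n}\bigr|$, that is $|A|\,\bigl(a_{kk}^{(k-1)}\bigr)^{n-k-1}=\bigl|(a_{ij}^{(k)})_{k<i,j\le n}\bigr|$ in $\mathbb{Q}(a_{ij})$; since both sides are polynomials, they agree in $\mathbb{Z}[a_{ij}]$, and specializing back to $A$ completes the proof.

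I expect the only delicate points to be bookkeeping rather than substance: matching the lemma's index convention so that $a_{kk}^{(k-1)}=|P|$ and $a_{ij}^{(k)}$ is the bordered determinant above, and phrasing the ``polynomial identity, hence specialize'' reduction carefully, since over a general integral domain one cannot divide by $|P|$ directly. As an alternative I could argue by induction on $k$ with Chio's pivotal condensation (the $k=1$ case, proved by scaling rows $2,\dots,n$ by $a_{11}$ and then eliminating the first column) as both base and inductive step; but that requires an auxiliary identity linking level $k$ to level $k-1$ and is messier than the single Schur-complement computation above, so I would not take that route.
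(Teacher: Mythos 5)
The paper does not actually prove this lemma: it is stated as a known result with a pointer to Bareiss~\cite{bar1968}, and the tombstone at the end belongs to the statement, not to a proof. So there is no in-paper argument to compare against; what matters is whether your proof stands on its own, and it does. The reduction to the generic case over $\mathbb{Q}(a_{ij})$ is exactly the right way to handle the possible non-invertibility (or vanishing) of the leading $k\times k$ minor over an arbitrary integral domain, and the two computations you rely on --- $|A|=|P|\,\bigl|S-RP^{-1}Q\bigr|$ from block elimination, and $a_{ij}^{(k)}=|P|\,(a_{ij}-\bm{r}_iP^{-1}\bm{q}_j)$ from the same elimination applied to the bordered $(k+1)\times(k+1)$ matrix --- are both correct, as is the bookkeeping $|A|=|P|\cdot|P|^{-(n-k)}\bigl|(a_{ij}^{(k)})_{k<i,j\le n}\bigr|$, which produces exactly the exponent $n-k-1$ in the statement. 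The identification $a_{kk}^{(k-1)}=|P|$ and the degenerate case $k=n-1$ are handled. This is essentially the standard modern proof of Sylvester's determinant identity, and it is close in spirit to Bareiss's own derivation, which organizes the same elimination as fraction-free Gaussian elimination; your single Schur-complement computation is indeed cleaner than the inductive Chio-condensation alternative you mention, so your choice of route is sound.
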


\begin{example}
  Let $F(x)$ and $G(x)$ be defined as
  \begin{equation}
%%   \label{eq:ex-fg}
%    \begin{split}
      F(x)
%      &
      = a_6 x^6+a_5 x^5+\cdots+a_0,
      \quad
%      a_6\ne 0,\\
      G(x) 
%      &
      = b_5 x^5+b_4 x^4+\cdots+b_0,
%      \quad b_5\ne 0,
%    \end{split}
  \end{equation}
  with $a_6\ne 0$ and $b_5\ne 0$.  We assume that vectors of
  coefficients $(a_6,a_5)$ and $(b_5,b_4)$ are linearly independent as
  vectors over $K$, and that $\prs(F,G)=(P_1^{(1)}=F,\ P_2^{(1)}=G,\ 
  P_3^{(1)}=\gcd(F,G))$ with $\deg(P_3^{(1)})=4$.  Consider the
  $(2,2)$-th nested subresultant; its matrix is defined as
%  $
  \begin{equation}
    \label{eq:aj}
    \small
    \tilde{N}^{(2,2)}=
    \begin{pmatrix}
      A_4 & 4A_4 & \\
      A_3 & 3A_3 & 4A_4 \\
      A_2 & 2A_2 & 3A_3 \\
      A_1 & A_1  & 2A_2 \\
      A_0 &      & A_1
    \end{pmatrix}
    ,
    \quad
    A_j=
    \begin{vmatrix}
      a_6 & b_5 & \\
      a_5 & b_4 & b_5 \\
      a_j & b_{j-1} & b_j
    \end{vmatrix}
    ,
  \end{equation}
%    $
  for $j\le 4$ with $b_j=0$ for $j<0$.  Now, let us calculate the
  leading coefficient of $\tilde{\subres}_{2,2}(F,G)$ as
  \begin{equation}
    \label{eq:ex-h1}
    \small
    \begin{split}
    |\tilde{N}^{(2,2)}_{2}|=
    \begin{vmatrix}
      A_4 & 4A_4 & \\
      A_3 & 3A_3 & 4A_4 \\
      A_2 & 2A_2 & 3A_3
    \end{vmatrix}
    &=
      \begin{vmatrix}
        \begin{vmatrix}
          a_6 & b_5 \\
          a_5 & b_4 & b_5 \\
          a_4 & b_3 & b_4
        \end{vmatrix}
        &
        \begin{vmatrix}
          a_6 & b_5 & \\
          a_5 & b_4 & b_5 \\
          4a_4 & 4b_3 & 4b_4
        \end{vmatrix}
        &
        \begin{vmatrix}
          a_6 & b_5 & \\
          a_5 & b_4 & b_5 \\
          0a_4 & 0b_3 & 0b_4
        \end{vmatrix}
        \\
        \noalign{\vskip2pt}
        \begin{vmatrix}
          a_6 & b_5 \\
          a_5 & b_4 & b_5 \\
          a_3 & b_2 & b_3
        \end{vmatrix}
        &
        \begin{vmatrix}
          a_6 & b_5 \\
          a_5 & b_4 & b_5 \\
          3a_3 & 3b_2 & 3b_3
        \end{vmatrix}
        &
        \begin{vmatrix}
          a_6 & b_5 & \\
          a_5 & b_4 & b_5 \\
          4a_4 & 4b_3 & 4b_4
        \end{vmatrix}
        \\
        \noalign{\vskip2pt}
        \noalign{\vskip2pt}
        \begin{vmatrix}
          a_6 & b_5 \\
          a_5 & b_4 & b_5 \\
          a_2 & b_1 & b_2
        \end{vmatrix}
        &
        \begin{vmatrix}
          a_6 & b_5 \\
          a_5 & b_4 & b_5 \\
          2a_2 & 2b_1 & 2b_2
        \end{vmatrix}
        &
        \begin{vmatrix}
          a_6 & b_5 & \\
          a_5 & b_4 & b_5 \\
          3a_3 & 3b_2 & 3b_3
        \end{vmatrix}
      \end{vmatrix}
      \\
    &=|H|
    =
    \left|
      \begin{pmatrix}
        H_{p,q}
      \end{pmatrix}
    \right|
    .
    \end{split}
  \end{equation}
  To apply the Sylvester's identity on H, we make the $(3,1)$ and the
  $(3,2)$ elements in $H_{p,2}$ and $H_{p,3}$ ($p=1,2,3$) equal to
  those elements in $H_{p,1}$, respectively, by adding the first and
  the second rows, multiplied by certain numbers, to the third row.
  For example, in $H_{1,2}$, calculate $x_{12}$ and $y_{12}$ by
  solving a system of linear equations
  \begin{equation}
    \label{eq:h12}
    \left\{
      \begin{split}
        a_6 x_{12} + a_5 y_{12} &= -4a_4+a_4=-3a_4 \\
        b_5 x_{12} + b_4 y_{12} &= -4b_3+b_3=-3b_3
      \end{split}
    \right.
    ,
  \end{equation}
  (Note that (\ref{eq:h12}) has a solution in $K$ by assumption), then
  add the first row multiplied by $x_{12}$ and the second row
  multiplied by $y_{12}$, respectively, to the third row.  Then, we
  have $
%    \begin{equation}
    H_{1,2}=
    \begin{vmatrix}
      a_6 & b_5 \\
      a_5 & b_4 & b_5 \\
      a_4 & b_3 & h_{12}
     \end{vmatrix}
%    , \quad
     $ with $
     h_{12} = 4 b_4 + y_{12}b_5.
%  \end{equation}
     $ Doing similar calculations for the other $H_{p,q}$, we
     calculate $h_{p,q}$ for $H_{p,q}$ similarly as in the above.
     Finally, by the Sylvester's identity, we have
  \begin{equation}
    |\tilde{N}^{(2,2)}_2|=
    \begin{vmatrix}
      a_6 & b_5 \\
      a_5 & b_4
    \end{vmatrix}
    ^2
    \begin{vmatrix}
      a_6 & b_5 & \\
      a_5 & b_4 & b_5 & b_5 & b_5\\
      a_4 & b_3 & b_4 & h_{12} & h_{13}\\
      a_3 & b_2 & b_3 & h_{22} & h_{23}\\
      a_2 & b_1 & b_2 & h_{32} & h_{33}
    \end{vmatrix}
    =
    \begin{vmatrix}
      a_6 & b_5 \\
      a_5 & b_4
    \end{vmatrix}
    ^2
    |\hat{N}^{(2,2)}_2|
    ,
  \end{equation}
  note that we have derived $\hat{N}^{(2,2)}_2$ as a reduced form of
  $\tilde{N}^{(2,2)}_2$.  \qed
\end{example}

%% We define the reduced nested subresultant (matrix) as follows.
\begin{definition}[Reduced Nested Subresultant Matrix]
  \label{def:rednessubresmat}
  Let $F$ and $G$ be defined as in \textup{(\ref{eq:fg})}, and let
  $(P_1^{(1)},\ldots,P_{l_1}^{(1)},\ldots,$
  $P_1^{(t)},\ldots,P_{l_t}^{(t)})$ be complete recursive PRS for $F$
  and $G$ as in Definition~\ref{def:recprs}.  Then, for each tuple of
  numbers $(k,j)$ with $k=1,\ldots,t$ and $j=j_{k-1}-2,\ldots,0$,
  define matrix $\hat{N}^{(k,j)}(F,G)$ recursively as follows.
  \begin{enumerate}
  \item For $k=1$, let $\hat{N}^{(1,j)}(F,G)=N^{(j)}(F,G)$.
  \item For $k>1$, let $\hat{N}_U^{(k-1,j_{k-1})}(F,G)$ be a
    sub-matrix of $\hat{N}^{(k-1,j_{k-1})}(F,G)$ by deleting the
    bottom $j_{k-1}+1$ rows, and $\hat{N}_L^{(k-1,j_{k-1})}(F,G)$ be a
    sub-matrix of $\hat{N}^{(k-1,j_{k-1})}(F,G)$ by taking the bottom
    $j_{k-1}+1$ rows, respectively.  For $\tau=j_{k-1},\ldots,0$ let
    $\hat{N}_\tau^{(k-1,j_{k-1})}(F,G)$ be a sub-matrix of
    $\hat{N}^{(k-1,j_{k-1})}(F,G)$ by putting
    $\hat{N}_U^{(k-1,j_{k-1})}(F,G)$ on the top and the
    $(j_{k-1}-\tau+1)$-th row of\linebreak
    $\hat{N}_L^{(k-1,j_{k-1})}(F,G)$ in
    the bottom row.  Let
    $\hat{A}_\tau^{(k-1)}=|\hat{N}_\tau^{(k-1,j_{k-1})}|$ and
    construct a matrix $H$ as
  \begin{equation}
    \label{eq:h1}
    H =
    \begin{pmatrix}
      H_{p,q}
    \end{pmatrix}
    =
    N^{(j)}
    \left(
      \hat{A}^{(k-1)}(x), \frac{d}{dx}\hat{A}^{(k-1)}(x)
    \right),
  \end{equation}
  where
%  \begin{equation}
    $\hat{A}^{(k-1)}(x)=\hat{A}^{(k-1)}_{j_{k-1}}x^{j_{k-1}}+\cdots
    +\hat{A}^{(k-1)}_0x^0.$
%  \end{equation}
  Since $\hat{N}_\tau^{(k-1,j_{k-1})}$ consists of
  $\hat{N}_U^{(k-1,j_{k-1})}$ and a row vector in the bottom, we
  express $\hat{N}_U^{(k-1,j_{k-1})}=
  \begin{pmatrix}
    U^{(k)} | \bm{v}^{(k)}
  \end{pmatrix}
  $, where $U^{(k)}$ is a square matrix and $\bm{v}^{(k)}$ is a column
  vector, and the row vector by $
  \begin{pmatrix}
    \bm{b}_{p,q}^{(k)} \Bigm| g_{p,q}^{(k)}
  \end{pmatrix}
  $, where $\bm{b}_{p,q}^{(k)}$ is a row vector and $g_{p,q}^{(k)}$ is
  a number, respectively, such that
  \begin{equation}
    \label{eq:h1pq}
%    $
    H_{p,q}=
    \left|
      \begin{array}{c|c}
        U^{(k)} & \bm{v}^{(k)} \\
        \hline
        \bm{b}_{p,q}^{(k)} & g_{p,q}^{(k)}
      \end{array}
    \right|
%    $
    ,
  \end{equation}
  with $\bm{b}_{p,q}^{(k)}=\bm{0}$ and $g_{p,q}^{(k)}=0$ for
  $H_{p,q}=0$.  Furthermore, we assume that $U^{(k)}$ is not singular.
  Then, for $p=1,\ldots,n_1^{(k)}+n_2^{(k)}-j$ and
  $q=2,\ldots,n_1^{(k)}+n_2^{(k)}-j$, calculate a row vector
  $\bm{x}_{p,q}^{(k)}$ as a solution of the equation
%   \begin{equation}
%     \label{eq:ukeq}
    $\bm{x}_{p,q}^{(k)}U^{(k)}=\bm{b}_{p,1}^{(k)}$,
%  \end{equation}
  and define $h_{p,q}^{(k)}$ as
%  \begin{equation}
    $h_{p,q}^{(k)}=\bm{x}_{p,q}^{(k)}\bm{v}^{(k)}$.
%  \end{equation}
  Finally, define $\hat{N}^{(k,j)}(F,G)$ as
%  \begin{equation}
  \begin{gather}
    \label{eq:rednessubresmat}
    \hat{N}^{(k,j)}(F,G)=
    \begin{pmatrix}
      U^{(k)} & \bm{v}^{(k)} & \bm{v}^{(k)} & \cdots & \bm{v}^{(k)} \\
      \bm{b}_{1,1}^{(k)} & g_{1,1}^{(k)} & h_{1,2}^{(k)} & \cdots &
      h_{1,J_{k,j}}^{(k)} \\
      \bm{b}_{2,1}^{(k)} & g_{2,1}^{(k)} & h_{2,2}^{(k)} & \cdots &
      h_{2,J_{k,j}}^{(k)} \\
      \vdots & \vdots & \vdots & & \vdots \\
      \bm{b}_{I_{k,j},1}^{(k)} & g_{I_{k,j},1}^{(k)} &
      h_{I_{k,j},2}^{(k)} & \cdots & h_{I_{k,j},J_{k,j}}^{(k)}
    \end{pmatrix}
    ,
%  \end{equation}
%  where
%  \begin{equation}
    \\
    \label{eq:ikj-jkj}
    \begin{split}
      I_{k,j} 
      &
      = n_1^{(k)}+n_2^{(k)}-j=(2j_{k-1}-2j-1)+j,
      \\
      J_{k,j} 
      &
      = n_1^{(k)}+n_2^{(k)}-2j=2j_{k-1}-2j-1.
    \end{split}
    \end{gather}
%  \end{equation}
  Then, $\hat{N}^{(k,j)}(F,G)$ is called the \emph{$(k,j)$-th reduced
    nested subresultant matrix of $F$ and $G$}. \qed
  \end{enumerate}
\end{definition}
\begin{proposition}
  \label{pr:rednessubresmatord}
  For $k=1,\ldots,t$ and $j<j_{k-1}-1$, the numbers of rows and
  columns of the $(k,j)$-th reduced nested subresultant matrix
  $\hat{N}^{(k,j)}(F,G)$ are
%  \begin{equation}
%    \label{eq:rednessubresmatord}
%    \begin{split}
%      &
      $(m+n-2(k-1)-2j)+j$ and
%      ,
%      \\
%      \quad
%      &
      $(m+n-2(k-1)-2j)$,
%    \end{split}
%  \end{equation}
  respectively.
\end{proposition}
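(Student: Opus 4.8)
The plan is to prove both formulas simultaneously by induction on the recursion depth $k$, simply propagating the row and column counts through the recursive construction of $\hat{N}^{(k,j)}(F,G)$ in Definition~\ref{def:rednessubresmat}. For the base case $k=1$ we have $\hat{N}^{(1,j)}(F,G)=N^{(j)}(F,G)$, which by definition of the subresultant matrix has $m+n-j$ rows and $m+n-2j$ columns; since for $k=1$ the asserted numbers are $(m+n-2j)+j=m+n-j$ and $m+n-2j$, the base case holds.

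For the inductive step I assume the formulas for $\hat{N}^{(k-1,j_{k-1})}(F,G)$ and write $c:=m+n-2(k-2)-2j_{k-1}$ for its number of columns, so that it has $c+j_{k-1}$ rows. Then I walk through the construction step by step. First, deleting the bottom $j_{k-1}+1$ rows produces $\hat{N}_U^{(k-1,j_{k-1})}$ with $c-1$ rows and $c$ columns, so in the splitting $\hat{N}_U^{(k-1,j_{k-1})}=(U^{(k)}\mid\bm{v}^{(k)})$ the square block $U^{(k)}$ has order $c-1$ and $\bm{v}^{(k)}$ has length $c-1$. Second, the matrix $H$ of (\ref{eq:h1}) is the $j$-th subresultant matrix of $\hat{A}^{(k-1)}(x)$ and its derivative; since $\hat{A}^{(k-1)}(x)$ has degree $j_{k-1}$ and its derivative has degree $j_{k-1}-1$, the size convention for subresultant matrices makes $H$ an $(2j_{k-1}-1-j)\times(2j_{k-1}-1-2j)$ matrix, i.e.\ $H$ has $I_{k,j}$ rows and $J_{k,j}$ columns as in (\ref{eq:ikj-jkj}), consistently with $n_1^{(k)}=j_{k-1}$ and $n_2^{(k)}=j_{k-1}-1$. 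Third, reading off (\ref{eq:rednessubresmat}): the assembled matrix has a top strip of $c-1$ rows (the rows of $(U^{(k)}\mid\bm{v}^{(k)}\mid\cdots\mid\bm{v}^{(k)})$) and a bottom strip of $I_{k,j}$ rows indexed by $p$, while its columns are the $c-1$ columns of $U^{(k)}$ together with one column each for $g_{p,1}^{(k)},h_{p,2}^{(k)},\ldots,h_{p,J_{k,j}}^{(k)}$, hence $J_{k,j}$ further columns, matching the $J_{k,j}$ copies of $\bm{v}^{(k)}$ in the top strip. Thus $\hat{N}^{(k,j)}(F,G)$ has $(c-1)+I_{k,j}$ rows and $(c-1)+J_{k,j}$ columns. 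Substituting $c=m+n-2(k-2)-2j_{k-1}$, $I_{k,j}=2j_{k-1}-1-j$ and $J_{k,j}=2j_{k-1}-1-2j$ and simplifying — the $j_{k-1}$-terms cancel and $-2(k-2)-2=-2(k-1)$ — gives $m+n-2(k-1)-j=(m+n-2(k-1)-2j)+j$ rows and $m+n-2(k-1)-2j$ columns, which closes the induction.

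The argument is essentially dimension bookkeeping, so I do not expect a deep obstacle; the point that needs the most care is keeping the recursion well-founded and the shapes unambiguous. Concretely, I must invoke the induction hypothesis at exactly the index $(k-1,j_{k-1})$ and check that this index is admissible, i.e.\ $j_{k-1}<j_{k-2}-1$, which follows from the fact that the last element of the PRS $\prs(P_1^{(k-1)},P_2^{(k-1)})$ has degree strictly below $\deg P_2^{(k-1)}=j_{k-2}-1$; and I must lean on the hypotheses built into Definition~\ref{def:rednessubresmat} — the linear-independence assumptions and the nonsingularity of each $U^{(k)}$ — to guarantee that the systems $\bm{x}_{p,q}^{(k)}U^{(k)}=\bm{b}_{p,1}^{(k)}$ are solvable and that $\hat{A}^{(k-1)}(x)$ genuinely has degree $j_{k-1}$, for only then do $H$ and $\hat{N}^{(k,j)}(F,G)$ have exactly the stated shapes. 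I must also read off the number of repeated $\bm{v}^{(k)}$ columns in (\ref{eq:rednessubresmat}) consistently with the bottom-row entries $g_{p,1}^{(k)},h_{p,2}^{(k)},\ldots,h_{p,J_{k,j}}^{(k)}$, which pins it to $J_{k,j}$. Once these are in place the arithmetic is immediate, and the resulting column count $m+n-2(k-1)-2j$, which decreases linearly in $k$ rather than growing multiplicatively as in Proposition~\ref{prop:recsubresmat}, is precisely the size reduction claimed in Sect.~\ref{sec:intro}.
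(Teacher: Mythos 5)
Your proof is correct and follows essentially the same route as the paper: induction on $k$, using the induction hypothesis to pin down the order of $U^{(k)}$ as $(m+n-2(k-2)-2j_{k-1})-1$, and then reading the row and column counts of $\hat{N}^{(k,j)}$ off the block structure of (\ref{eq:rednessubresmat}) together with the values of $I_{k,j}$ and $J_{k,j}$ in (\ref{eq:ikj-jkj}). The paper's own proof is just a terser version of this same bookkeeping; your added remarks on well-foundedness of the recursion and the nonsingularity of $U^{(k)}$ are sensible but not part of the published argument.
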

\begin{proof}
  By induction on $k$.  It is obvious for $k=1$. Assume
  that the proposition is valid for $1,\ldots,k-1$.
  Then, the numbers of rows and columns of matrix $U^{(k)}$ in
  (\ref{eq:rednessubresmat}) are equal to
%  \begin{equation}
    $(m+n-2\{(k-1)-1\}-2j_{k-1})-1$,
%  \end{equation}
  respectively.  Therefore, by (\ref{eq:rednessubresmat}) and
  (\ref{eq:ikj-jkj}), we prove the proposition for $k$.
  \qed
\end{proof}
Note that, as Proposition~\ref{pr:rednessubresmatord} shows, the size
of the reduced nested subresultant matrix, which is at most the sum of
the degree of the initial polynomials, is much smaller than that of
the recursive subresultant matrix (see
Proposition~\ref{prop:recsubresmat}).
\begin{definition}[Reduced Nested Subresultant]
  \label{def:rednessubres}
  Let $F$ and $G$ be defined as in \textup{(\ref{eq:fg})}, and let
  $(P_1^{(1)},\ldots,P_{l_1}^{(1)},\ldots,P_1^{(t)},\ldots,P_{l_t}^{(t)})$
  be complete recursive PRS for $F$ and $G$ as in
  Definition~\ref{def:recprs}.  For $j=j_{k-1}-2,\ldots,0$ and
  $\tau=j,\ldots,0$, let
  $\hat{N}_\tau^{(k,j)}=\hat{N}_\tau^{(k,j)}(F,G)$ be a sub-matrix of
  the $(k,j)$-th reduced nested subresultant matrix
  $\hat{N}^{(k,j)}(F,G)$ obtained by the top $m+n-2(k-1)-2j-1$ rows
  and the $(m+n-2(k-1)-j-\tau)$-th row (note that
  $\hat{N}_\tau^{(k,j)}(F,G)$ is a square matrix).  Then, the
  polynomial
  \(
%  \begin{equation}
    \rednessubres_{k,j}(F,G)=|\hat{N}_j^{(k,j)}(F,G)|x^j+\cdots
    +|\hat{N}_0^{(k,j)}(F,G)|x^0
%  \end{equation}
    \)
  is called the $(k,j)$-th reduced nested subresultant of $F$ and $G$.
  \qed
\end{definition}

Now, we derive the relationship between the nested and the reduced
nested subresultants.

\begin{theorem}
  \label{th:nessubres-rednessubres-equiv}
  Let $F$ and $G$ be defined as in \textup{(\ref{eq:fg})}, and let
  $(P_1^{(1)},\ldots,P_{l_1}^{(1)},\ldots,P_1^{(t)},$
  $\ldots,P_{l_t}^{(t)})$
  be complete recursive PRS for $F$ and $G$ as in
  Definition~\ref{def:recprs}.  For $k=2,\ldots,t$,
  $j=j_{k-1}-2,\ldots,0$ with $J_{k,j}$ as in
  (\ref{eq:rednessubresmat}), define $\hat{B}_{k,j}$ and $\hat{R}_k$
  as
%  \begin{equation}
    $\hat{B}_{k,j}=|U^{(k)}|^{J_{k,j}-1}$
%  \end{equation}
  with $\hat{B}_k=\hat{B}_{k,j_k}$ and $\hat{B}_1=\hat{B}_2=1$, and
%  \begin{equation}
    $\hat{R}_k = (\hat{R}_{k-1}\cdot\hat{B}_{k-1})^{J_{k,j_k}}$
%  \end{equation}
  with $\hat{R}_1=\hat{R}_2=1$, respectively.  Then, we have
  \begin{equation}
    \nessubres_{k,j}(F,G)=
    (\hat{R}_{k-1}\cdot\hat{B}_{k-1})^{J_{k,j}}\hat{B}_{k,j}\cdot
    \rednessubres_{k,j}(F,G).
  \end{equation}
\end{theorem}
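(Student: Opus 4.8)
The plan is to imitate the proof of Theorem~\ref{th:res-nes-subres-equiv}: reduce the statement to a coefficient-wise identity and prove that identity by induction on $k$. Precisely, I would establish the lemma that for $k=2,\ldots,t$, $j=j_{k-1}-2,\ldots,0$ and $\tau=j,\ldots,0$,
\[
  |\tilde{N}_\tau^{(k,j)}(F,G)|
  =(\hat{R}_{k-1}\cdot\hat{B}_{k-1})^{J_{k,j}}\,\hat{B}_{k,j}\,
  |\hat{N}_\tau^{(k,j)}(F,G)|,
\]
with $J_{k,j}$ as in \textup{(\ref{eq:ikj-jkj})}. Granting this, the theorem follows at once: multiplying by $x^\tau$, summing over $\tau=j,\ldots,0$, and using that the factor $(\hat{R}_{k-1}\hat{B}_{k-1})^{J_{k,j}}\hat{B}_{k,j}$ is independent of $\tau$, the left-hand side becomes $\nessubres_{k,j}(F,G)$ and the right-hand side becomes that factor times $\rednessubres_{k,j}(F,G)$.

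For the base case $k=2$ I would use $\hat{R}_1=\hat{B}_1=1$ together with the observation that at level $1$ the nested, the reduced nested, and the ordinary subresultant matrices all coincide with $N^{(j_1)}(F,G)$, so that $\nessubres_{1,j_1}(F,G)=\rednessubres_{1,j_1}(F,G)=\hat{A}^{(1)}(x)$ and hence $\tilde{N}^{(2,j)}(F,G)=H$, the matrix $N^{(j)}(\hat{A}^{(1)},\frac{d}{dx}\hat{A}^{(1)})$ of Definition~\ref{def:rednessubresmat}. Thus $|\tilde{N}_\tau^{(2,j)}|=|H^{(\tau)}|$, where $H^{(\tau)}$ is the order-$J_{2,j}$ submatrix of $H$ cut out by the rows and columns that define $\tilde{N}_\tau^{(2,j)}$. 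Viewed as a determinant of the form \textup{(\ref{eq:h1pq})}, every entry of $H^{(\tau)}$ has its top rows equal to the common block $(U^{(2)}\mid\bm{v}^{(2)})$. Since $U^{(2)}$ is nonsingular (the standing hypothesis of Definition~\ref{def:rednessubresmat}), in each block with column index $q\ge2$ one adds suitable multiples of these common rows to the last row, turning $\bm{b}_{p,q}^{(2)}$ into $\bm{b}_{p,1}^{(2)}$ and $g_{p,q}^{(2)}$ into $h_{p,q}^{(2)}$ without altering any block determinant — this is exactly the reduction performed in the Example. Afterwards the array of blocks is a matrix of bordered minors of the type occurring in the Sylvester identity (Lemma~\ref{lem:sylvester}) with leading block $U^{(2)}$ and with the flattened matrix $\hat{N}_\tau^{(2,j)}$ of \textup{(\ref{eq:rednessubresmat})} playing the role of $A$; since the order of $\hat{N}_\tau^{(2,j)}$ exceeds that of $U^{(2)}$ by exactly $J_{2,j}$, Lemma~\ref{lem:sylvester} gives $|H^{(\tau)}|=|U^{(2)}|^{J_{2,j}-1}|\hat{N}_\tau^{(2,j)}|=\hat{B}_{2,j}|\hat{N}_\tau^{(2,j)}|$, i.e.\ the lemma for $k=2$.

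For the induction step, assume the lemma for levels $1,\ldots,k-1$. Applying it at level $k-1$ with $j=j_{k-1}$, and using $\hat{R}_{k-1}=(\hat{R}_{k-2}\hat{B}_{k-2})^{J_{k-1,j_{k-1}}}$ together with $\hat{B}_{k-1}=\hat{B}_{k-1,j_{k-1}}$, we get $\nessubres_{k-1,j_{k-1}}(F,G)=\lambda\,\hat{A}^{(k-1)}(x)$ with $\lambda=\hat{R}_{k-1}\hat{B}_{k-1}$. The substitution $P\mapsto\lambda P$ multiplies every coefficient by $\lambda$ and commutes with $\frac{d}{dx}$, so the Sylvester-type matrix scales entrywise: $\tilde{N}^{(k,j)}(F,G)=N^{(j)}(\lambda\hat{A}^{(k-1)},\lambda\frac{d}{dx}\hat{A}^{(k-1)})=\lambda\,H$. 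Restricting to the order-$J_{k,j}$ submatrix defining $\tilde{N}_\tau^{(k,j)}$ gives $|\tilde{N}_\tau^{(k,j)}|=\lambda^{J_{k,j}}|H^{(\tau)}|=(\hat{R}_{k-1}\hat{B}_{k-1})^{J_{k,j}}|H^{(\tau)}|$. Then the same block reduction and application of the Sylvester identity as in the base case — now with pivot block $U^{(k)}$, nonsingular by hypothesis and of the order computed in the proof of Proposition~\ref{pr:rednessubresmatord} — yields $|H^{(\tau)}|=|U^{(k)}|^{J_{k,j}-1}|\hat{N}_\tau^{(k,j)}|=\hat{B}_{k,j}|\hat{N}_\tau^{(k,j)}|$. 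Combining the two displayed relations proves the lemma for $k$, hence the theorem.

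I expect the block-reduction-plus-Sylvester step to be the main obstacle: one must check that after the row operations the array of $(s+1)$-st order bordered determinants ($s$ the order of $U^{(k)}$) is \emph{literally} the family $\{a_{i,j}^{(s)}\}$ that Lemma~\ref{lem:sylvester} attaches to $A=\hat{N}_\tau^{(k,j)}$. This amounts to matching the placement of the repeated column $\bm{v}^{(k)}$ and of the rows $\bm{b}_{p,1}^{(k)}$ against the row and column order prescribed by \textup{(\ref{eq:rednessubresmat})}, verifying that the integer multipliers introduced by $\frac{d}{dx}$ and the vanishing blocks (those with $\bm{b}_{p,q}^{(k)}=\bm{0}$, $g_{p,q}^{(k)}=0$) are carried through consistently, and confirming that no row operation needs more than the nonsingularity of $U^{(k)}$. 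These are the same bookkeeping points settled, for the recursive subresultant, in Terui~\cite{ter2003}; note that no signs appear, since the Sylvester identity carries none, in agreement with the sign-free statement of the theorem.
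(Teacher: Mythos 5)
Your proposal follows essentially the same route as the paper: it reduces the theorem to the coefficient-wise identity $|\tilde{N}_\tau^{(k,j)}|=(\hat{R}_{k-1}\hat{B}_{k-1})^{J_{k,j}}\hat{B}_{k,j}|\hat{N}_\tau^{(k,j)}|$, proves it by induction on $k$, extracts the scalar $(\hat{R}_{k-1}\hat{B}_{k-1})^{J_{k,j}}$ from the order-$J_{k,j}$ matrix of nested minors, converts the entries $g_{p,q}^{(k)}$ to $h_{p,q}^{(k)}$ by the row operations underlying Definition~\ref{def:rednessubresmat}, and finishes with the Sylvester identity, exactly as in the paper's Lemma~\ref{le:nessubres-rednessubres-equiv}. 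The only cosmetic difference is that you take $k=2$ rather than $k=1$ as the base case; the bookkeeping issue you flag at the end is likewise left implicit in the paper.
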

To prove Theorem~\ref{th:nessubres-rednessubres-equiv}, we prove the
following lemma.
\begin{lemma}
  \label{le:nessubres-rednessubres-equiv}
  For $k=1,\ldots,t$, $j=j_{k-1}-2,\ldots,0$ and $\tau=j,\ldots,0$, we
  have
  \begin{equation}
    \label{eq:nessubresmat-rednessubresmat-equiv}
    |\tilde{N}_\tau^{(k,j)}(F,G)|=
    (\hat{R}_{k-1}\cdot\hat{B}_{k-1})^{J_{k,j}}\hat{B}_{k,j}
    |\hat{N}_\tau^{(k,j)}(F,G)|.
  \end{equation}
\end{lemma}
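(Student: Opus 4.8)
The plan is to prove Lemma~\ref{le:nessubres-rednessubres-equiv} by induction on $k$, mirroring the structure of the proof of the previous lemma (the one relating $\tilde{N}$ and $\bar{N}$) but now tracking the determinantal factor produced by a single application of the Sylvester identity at each recursion level. The base cases $k=1$ (and $k=2$, where $\hat{R}_1=\hat{B}_1=1$ and $\hat{N}^{(2,j)}$ is obtained from $\tilde{N}^{(2,j)}$ by one Sylvester reduction with pivot block $U^{(2)}$) are handled directly from the definitions: for $k=1$ both matrices equal $N^{(j)}(F,G)$, so the identity holds with all factors equal to $1$; for $k=2$ it is exactly the computation carried out in the Example, generalized to arbitrary $\tau$.

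For the inductive step, assume \eqref{eq:nessubresmat-rednessubresmat-equiv} holds for $1,\ldots,k-1$; in particular, evaluating at $(k-1,j_{k-1})$ and taking $\tau=j_{k-1},\ldots,0$ gives $|\tilde{N}_\tau^{(k-1,j_{k-1})}| = (\hat{R}_{k-2}\hat{B}_{k-2})^{J_{k-1,j_{k-1}}}\hat{B}_{k-1,j_{k-1}}\,|\hat{N}_\tau^{(k-1,j_{k-1})}| = \hat{R}_{k-1}\hat{B}_{k-1}\cdot|\hat{N}_\tau^{(k-1,j_{k-1})}|$ by the definitions of $\hat{R}_k$ and the convention $\hat{B}_{k-1}=\hat{B}_{k-1,j_{k-1}}$. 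Hence the coefficient polynomials satisfy $\tilde{\subres}_{k-1,j_{k-1}}(F,G) = \hat{R}_{k-1}\hat{B}_{k-1}\cdot(\text{polynomial with coefficients }|\hat{N}_\tau^{(k-1,j_{k-1})}|)$, and the leading coefficient $\hat{A}^{(k-1)}_{j_{k-1}}=|\hat{N}_{j_{k-1}}^{(k-1,j_{k-1})}|$ is exactly the quantity $\hat{A}^{(k-1)}$ feeding into the construction of $H$ in \eqref{eq:h1}. The key observation is that $\tilde{N}^{(k,j)}(F,G) = |\tilde W_{k-1}|\cdot N^{(j)}(P_1^{(k)},P_2^{(k)})$ from the previous lemma's proof, while the matrix $H$ of \eqref{eq:h1} is built from the $\hat A^{(k-1)}_\tau$; since $\hat A^{(k-1)}_\tau$ and $|\tilde N^{(k-1,j_{k-1})}_\tau|$ differ by the fixed factor $\hat R_{k-1}\hat B_{k-1}$, each entry $H_{p,q}$ of the nested matrix is a scalar multiple of the corresponding entry built from the $\hat A$'s, and one reads off the power of this scalar accumulated over all rows. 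Applying Lemma~\ref{lem:sylvester} to $H$ with the pivot block $U^{(k)}$ (nonsingular by the standing assumption in Definition~\ref{def:rednessubresmat}) yields $|H_\tau| = |U^{(k)}|^{\,J_{k,j}-1}\,|\hat N^{(k,j)}_\tau| = \hat{B}_{k,j}\,|\hat N^{(k,j)}_\tau|$, where $H_\tau$ is the appropriate square submatrix corresponding to the $\tau$-th coefficient. Combining this with the substitution factor $(\hat R_{k-1}\hat B_{k-1})^{J_{k,j}}$ coming from the $J_{k,j}$ columns of $H$ (equivalently, from passing from the coefficients of $\tilde{\subres}_{k-1,j_{k-1}}$ to those of $\hat A^{(k-1)}$ in both the $F$-block and the $G$-block of the subresultant matrix $N^{(j)}$) gives precisely $|\tilde N^{(k,j)}_\tau| = (\hat R_{k-1}\hat B_{k-1})^{J_{k,j}}\hat{B}_{k,j}\,|\hat N^{(k,j)}_\tau|$, as claimed.

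I expect the main obstacle to be the careful bookkeeping of the scalar $(\hat R_{k-1}\hat B_{k-1})^{J_{k,j}}$: one must verify that extracting the common factor $\hat R_{k-1}\hat B_{k-1}$ from every coefficient of $\tilde{\subres}_{k-1,j_{k-1}}$ contributes this factor raised to exactly the power $J_{k,j}=n_1^{(k)}+n_2^{(k)}-2j$, consistently across the columns of $N^{(j)}(\cdot,\tfrac{d}{dx}\cdot)$ that come from $\hat A^{(k-1)}$ and from its derivative, and that the row operations realizing the Sylvester reduction do not disturb this count. A secondary technical point is confirming that the submatrix of $H$ selected for the $\tau$-th coefficient of $\nessubres_{k,j}$ maps under the Sylvester identity onto exactly the submatrix $\hat N^{(k,j)}_\tau$ of Definition~\ref{def:rednessubres} — i.e.\ that ``take the top $I_{k,j}-1$ rows plus one distinguished row'' commutes with the Sylvester elimination, which is true because the pivot rows (the top rows defining $U^{(k)}$ and $\bm v^{(k)}$) are always retained. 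Once these alignments are in place, the rest is the direct chain of equalities above, and the theorem follows by reading off $\nessubres_{k,j}=|\tilde N^{(k,j)}_j|x^j+\cdots+|\tilde N^{(k,j)}_0|x^0$ and substituting \eqref{eq:nessubresmat-rednessubresmat-equiv} coefficientwise.
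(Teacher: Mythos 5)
Your proposal is correct and follows essentially the same route as the paper: induction on $k$, using the inductive hypothesis to write $\tilde{A}^{(k-1)}_\tau=(\hat{R}_{k-1}\hat{B}_{k-1})\hat{A}^{(k-1)}_\tau$, pulling the scalar out of the $J_{k,j}\times J_{k,j}$ determinant to get the factor $(\hat{R}_{k-1}\hat{B}_{k-1})^{J_{k,j}}$, and then applying the Sylvester identity with pivot block $U^{(k)}$ to obtain $|H'|=|U^{(k)}|^{J_{k,j}-1}|\hat{N}^{(k,j)}_\tau|=\hat{B}_{k,j}|\hat{N}^{(k,j)}_\tau|$. The only cosmetic differences are your separate treatment of $k=2$ as a base case and the (unneeded) detour through $|\tilde{W}_{k-1}|$ from the earlier lemma's proof.
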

\begin{proof}
  By induction on $k$.  For $k=1$, it is obvious from the definitions
  of the nested and the reduced nested subresultants.  Assume that
%  (\ref{eq:nessubresmat-rednessubresmat-equiv})
  the lemma is valid for $1,\ldots,k-1$.  Then, for
  $\tau=j_{k-1},\ldots,0$, we have
  \begin{equation}
    \begin{split}
%    &
      |\tilde{N}_\tau^{(k-1,j_{k-1})}(F,G)|
%    \\
    =&
    (\hat{R}_{k-2}\cdot\hat{B}_{k-2})^{J_{k-1,j_{k-1}}}\hat{B}_{{k-1},j_{k-1}}
    |\hat{N}_\tau^{(k-1,j_{k-1})}(F,G)|
    \\
    =&
    (\hat{R}_{k-1}\cdot\hat{B}_{k-1})|\hat{N}_\tau^{(k-1,j_{k-1})}(F,G)|.
    \end{split}
  \end{equation}
  Let 
%  \begin{equation}
    $\tilde{A}_\tau^{(k-1)}=|\tilde{N}_\tau^{(k-1,j_{k-1})}|$ and
%    \quad
    $\hat{A}_\tau^{(k-1)}=|\hat{N}_\tau^{(k-1,j_{k-1})}|$.
%  \end{equation}
  Then, by the definition of the $(k,j)$-th nested subresultant, we
  have
  \begin{align}
    |\tilde{N}_\tau^{(k,j)}|
    =&
    \label{eq:nesresmat-tau}
    \tiny{
    \begin{vmatrix}
      \tilde{A}_{j_{k-1}}^{(k-1)} &        &       &
      j_{k-1}\tilde{A}_{j_{k-1}}^{(k-1)}    &        & \\
      \vdots & \ddots &  & \vdots & \ddots & \\
      \vdots &        & \tilde{A}_{j_{k-1}}^{(k-1)} &
      \vdots &        & j_{k-1}\tilde{A}_{j_{k-1}}^{(k-1)}\\
      \vdots &        & \vdots & \vdots &        & \vdots \\
      \tilde{A}_{2j-j_{k-1}+3}^{(k-1)} & \cdots &
      \tilde{A}_{j+1}^{(k-1)} &
      (2j-j_{k-1}+3)\tilde{A}_{2j-j_{k-1}+3}^{(k-1)} & \cdots &
      (j+2)\tilde{A}_{j+2}^{(k-1)} \\
      \hline
      \tilde{A}_{j-j_{k-1}+\tau+2}^{(k-1)} & \cdots &
      \tilde{A}_\tau^{(k-1)} &
      (j-j_{k-1}+\tau+2)\tilde{A}_{j-j_{k-1}+\tau+2}^{(k-1)} &
      \cdots & (\tau+1)\tilde{A}_{\tau+1}^{(k-1)}  
    \end{vmatrix}
  }
    \\
    \label{eq:nesresmat-tau-2}
    =&
    (\hat{R}_{k-1}\cdot\hat{B}_{k-1})^{J_{k,j}}|H'|,
  \end{align}
  where $\tilde{A}_{l}^{(k-1)}=0$ for $l<0$ and 
  $H'=
  \begin{pmatrix}
    H'_{p,q}
  \end{pmatrix}
  $ is defined as (\ref{eq:nesresmat-tau}) with
  $\tilde{A}_{l}^{(k-1)}$ replaced by $\hat{A}_{l}^{(k-1)}$ (note
  that $\tilde{N}_\tau^{(k,j)}$ and $H'$ are square matrices of order
  $J_{k,j}$).  Then, by Definition~\ref{def:rednessubresmat}, we can
  express $H'_{p,q}$ as
%  \begin{equation}
  $\displaystyle{
    H'_{p,q}=
    \left|
      \begin{array}{c|c}
        U^{(k)} & \bm{v}^{(k)} \\
        \hline
        \bm{b}'^{(k)}_{p,q} & g'^{(k)}_{p,q}
      \end{array}
    \right|
  }$
%  ,
%  \end{equation}
  with $\bm{b}'^{(k)}_{p,q}=\bm{0}$ and $g'^{(k)}_{p,q}=0$ for
  $H'_{p,q}=0$.  Note that, for $q=1,\ldots,J_{k,j}$, we have
  $\bm{b}'^{(k)}_{p,q}=\bm{b}^{(k)}_{p,q}$ and
  $g'^{(k)}_{p,q}=g^{(k)}_{p,q}$ for $p=1,\ldots,J_{k,j}-1$, and
  $\bm{b}'^{(k)}_{J_{k,j},q}=\bm{b}^{(k)}_{I_{k,j}-\tau,q}$ and
  $g'^{(k)}_{J_{k,j},q}=g^{(k)}_{I_{k,j}-\tau,q}$, where
  $\bm{b}^{(k)}_{p,q}$ and $g^{(k)}_{p,q}$ are defined in
  (\ref{eq:h1pq}), respectively.  Furthermore, by the definition of
  $h_{p,q}^{(k)}$ in Definition~\ref{def:rednessubresmat}, we have
  \begin{equation}
    |H'|=
    \small
    \begin{vmatrix}
      \left|
        \begin{array}{c|c}
          U^{(k)} & \bm{v}^{(k)} \\
          \hline
          \bm{b}'^{(k)}_{1,1} & g^{(k)}_{1,1}
        \end{array}
      \right|
      &
      \left|
        \begin{array}{c|c}
          U^{(k)} & \bm{v}^{(k)} \\
          \hline
          \bm{b}'^{(k)}_{1,1} & h^{(k)}_{1,2}
        \end{array}
      \right|
      &
      \cdots
      &
      \left|
        \begin{array}{c|c}
          U^{(k)} & \bm{v}^{(k)} \\
          \hline
          \bm{b}'^{(k)}_{1,1} & h^{(k)}_{1,J_{k,j}}
        \end{array}
      \right|
      \\
      \vdots & \vdots & & \vdots \\
      \left|
        \begin{array}{c|c}
          U^{(k)} & \bm{v}^{(k)} \\
          \hline
          \bm{b}'^{(k)}_{J_{k,j}-1,1} & g^{(k)}_{J_{k,j}-1,1}
        \end{array}
      \right|
      &
      \left|
        \begin{array}{c|c}
          U^{(k)} & \bm{v}^{(k)} \\
          \hline
          \bm{b}'^{(k)}_{J_{k,j}-1,1} & h^{(k)}_{J_{k,j}-1,2}
        \end{array}
      \right|
      &
      \cdots
      &
      \left|
        \begin{array}{c|c}
          U^{(k)} & \bm{v}^{(k)} \\
          \hline
          \bm{b}'^{(k)}_{J_{k,j}-1,1} & h^{(k)}_{J_{k,j}-1,J_{k,j}}
        \end{array}
      \right|
      \\
      \noalign{\vskip2pt}
      \left|
        \begin{array}{c|c}
          U^{(k)} & \bm{v}^{(k)} \\
          \hline
          \bm{b}'^{(k)}_{I_{k,j}-\tau,1} & g^{(k)}_{I_{k,j}-\tau,1}
        \end{array}
      \right|
      &
      \left|
        \begin{array}{c|c}
          U^{(k)} & \bm{v}^{(k)} \\
          \hline
          \bm{b}'^{(k)}_{I_{k,j}-\tau,1} & h^{(k)}_{I_{k,j}-\tau,2}
        \end{array}
      \right|
      &
      \cdots
      &
      \left|
        \begin{array}{c|c}
          U^{(k)} & \bm{v}^{(k)} \\
          \hline
          \bm{b}'^{(k)}_{I_{k,j}-\tau,1} & h^{(k)}_{I_{k,j}-\tau,J_{k,j}}
        \end{array}
      \right|
    \end{vmatrix}
    .
  \end{equation}
  By Lemma~\ref{lem:sylvester}, we have 
  \begin{equation}
    \label{eq:nesresmat-tau-2-reduced}
    |H'|=|U^{(k)}|^{J_{k,j}-1}|\hat{N}^{(k,j)}_\tau(F,G)|
    =\hat{B}_{k,j}|\hat{N}^{(k,j)}_\tau(F,G)|,
  \end{equation}
  hence, by putting (\ref{eq:nesresmat-tau-2-reduced}) into
  (\ref{eq:nesresmat-tau-2}), we prove the lemma.
  \qed
\end{proof}
\begin{remark}
  \label{re:time}
  We can estimate arithmetic computing time for the $(k,j)$-th reduced
  nested resultant matrix $\hat{N}^{(k,j)}$ in
  \textup{(\ref{eq:rednessubresmat})}, as follows.  The computing time
  for the elements $h_{p,q}$ is dominated by the time for Gaussian
  elimination of $U^{(k)}$.  Since the order of $U^{(k)}$ is equal to
  $m+n-2(k-2)-2j_{k-1}$ (see Proposition~\ref{pr:rednessubresmatord}),
  it is bounded by $O((m+n-2(k-2)-2j_{k-1})^3)$, or $O((m+n)^3)$ (see
  Golub and van Loan \textup{\cite{gol-vloa1996}} for example).  
%%   As Remark~\ref{re:nessubresmat} shows, 
  We can calculate
  $\hat{N}^{(k,j)}(F,G)$ for $j<j_{k-1}-2$ by $\hat{N}^{(k,0)}(F,G)$,
%%   Therefore, 
  hence the total computing time for $\hat{N}^{(k,j)}$ for the entire
  recursive PRS ($k=1,\ldots,t$) is bounded by $O(t(m+n)^3)$ (see also
  for the conclusion). \qed
\end{remark}

\section{Conclusion and Motivation}
\label{sec:disc}

In this paper, we have given two new expressions of subresultants for
the recursive PRS, the nested subresultant and the reduced nested
subresultant.  We have shown that the reduced nested subresultant
matrix reduces the size of the matrix drastically to at most the sum
of the degree of the initial polynomials compared with the recursive
subresultant matrix.  We have also shown that we can calculate the
reduced nested subresultant matrix by solving certain systems of
linear equations of order at most the sum of the degree of the initial
polynomials.

A main limitation of the reduced nested subresultant in this paper is
that we cannot calculate its matrix in the case the matrix $U^{(k)}$
in (\ref{eq:h1pq}) is singular.  We need to develop a method to
calculate the reduced nested subresultant matrix in the case such that
$U^{(k)}$ is singular in general.

From a point of view of computational complexity, the algorithm for
the reduced nested subresultant matrix has a cubic complexity bound in
terms of the degree of the input polynomials (see
Remark~\ref{re:time}).  However, subresultant algorithms which have a
quadratic complexity bound in terms of the degree of the input
polynomials have been proposed (\cite{duc2000},
\cite{lom-roy-eldin2000}); the algorithms exploit the structure of the
Sylvester matrix to increase their efficiency with controlling the
size of coefficients well.  Although, in this paper, we have primarily
focused our attention into reducing the structure of the nested
subresultant matrix to ``flat'' representation, development of more
efficient algorithm such as exploiting the structure of the Sylvester
matrix would be the next problem.  Furthermore, the reduced nested
subresultant may involve fractions which may be unusual for
subresultants, hence more detailed analysis of computational
efficiency including comparison with (ordinary and recursive)
subresultants would also be necessary.

We expect that the reduced nested subresultants can be used for
including approximate algebraic computation, especially for the
square-free decomposition of approximate univariate polynomials with
approximate GCD computations based on Singular Value Decomposition
(SVD) of subresultant matrices (\cite{cor-gia-tra-wat1995}
\cite{emi-gal-lom1997}), which motivates the present work.  We can
calculate approximate square-free decomposition of the given
polynomial $P(x)$ by several methods including calculation of the
approximate GCDs of $P(x),\ldots,P^{(n)}(x)$ (by $P^{(n)}(x)$ we
denote the $n$-th derivative of $P(x)$) or those of the recursive PRS
for $P(x)$ and $P'(x)$; as for these methods, we have to find the
representation of the subresultant matrices for
$P(x),\ldots,P^{(n)}(x)$, or that for the recursive PRS for $P(x)$ and
$P'(x)$, respectively.  While several algorithms based on different
representation of subresultant matrices have been proposed
(\cite{dtoc-gveg2002} \cite{rup1999}) for the former approach, we
expect that our reduced nested subresultant matrix can be used for the
latter approach.  To make use of the reduced nested subresultant
matrix, we need to reveal the relationship between the structure of
the subresultant matrices and their singular values; this is the
problem on which we are working now.

\bibliographystyle{splncs}
%\bibliography{terui-e}

\end{document}